\newcolumntype{M}[1]{>{\centering\arraybackslash}m{#1}}
\newtheorem{remark}{Remark}
\newtheorem{theorem}{Theorem}
\newtheorem{lemma}{Lemma}
\newtheorem{corollary}{Corollary}
\newcommand{\R}{\mathbb{R}}
\newcommand{\Gin}{\Gamma_{in}}
\newcommand{\Gout}{\Gamma_{out}}
\DeclareMathOperator{\Div}{div}
\newcommand{\be}{\begin{equation}}
\newcommand{\en}{\end{equation}}
 \newcommand{\ben}{\begin{equation*}}
 \newcommand{\enn}{\end{equation*}}
\title{Parameter estimation for macroscopic pedestrian dynamics models from
microscopic data}
\author{Susana N. Gomes \thanks{University of Warwick, Coventry CV4 7AL, UK (susana.gomes@warwick.ac.uk)}
\and Andrew M. Stuart \thanks{California Institute of Technology, 1200 E. California Blvd, Pasadena, CA 9112. } 
\and  Marie-Therese Wolfram \thanks{University of Warwick, Coventry CV4 7AL, UK and RICAM, Austrian Academy of Sciences, Altenbergerstr. 66, 4040 Linz, AT }}
\begin{document}
	\maketitle
	\begin{abstract}
		In this paper we develop a framework for parameter estimation in macroscopic pedestrian models using individual trajectories -- microscopic data. We consider a unidirectional flow 
		of pedestrians in a corridor and assume that the velocity decreases with the average density according to the fundamental diagram. Our model is formed from a coupling between a density dependent stochastic differential equation and a nonlinear partial differential equation
		for the density, and is hence of McKean--Vlasov type. 
		We discuss identifiability of the parameters appearing in the fundamental
		diagram from  trajectories of individuals, and we introduce  optimization 
		and Bayesian methods to perform the identification. We  analyze the performance of the developed methodologies in 
		various situations, such as for different in- and outflow conditions, for
		varying numbers of individual trajectories and for differing channel geometries.\\
		{\bf Keywords:} Macroscopic pedestrian models, generalized McKean--Vlasov equations, 
		parameter estimation, optimization-based and Bayesian inversion.\\
		{\bf AMS subject classifications:} 
	\end{abstract}
	
	\section{Introduction}
	\label{sec:intro}
	The complex dynamics of large pedestrian crowds as well as constant urbanization has initiated many developments in 
	the field of transportation research, physics, urban planning and more recently applied mathematics. Originally, model 
	development was mostly based on empirical observations. More recently
	a plethora of data, from video surveillance 
	cameras as well as experiments, is starting to become available. This data, usually individual trajectories, is used to calibrate individual based models or
	to identify characteristic relations, such as the fundamental diagram, 
	which have the potential to be observed in large pedestrian crowds. 
	In this paper we study such a calibration approach, setting up parameter estimation methodologies which allow us to estimate parameters
	in macroscopic pedestrian models using individual trajectories. 
	
	There is a rich literature on mathematical models for pedestrian dynamics, see \cite{CPT2014}. Individual based, also known as microscopic 
	models, are among the most popular in the engineering and transportation literature. In these models individuals are 
	characterized by their position and sometimes velocity, see for example \cite{Helbing1995}. On the macroscopic level the crowd is described by a density. These models are often derived from first principles, since the rigorous transition from the
	micro- to the macroscopic model is still an open problem in certain scaling regimes.  {Then more general interactions, as proposed in \cite{aggarwal2015nonlocal,colombo2012nonlocal,goatin2015mixed,hughes2003flow}, can be considered. In this paper we focus on mean-field interactions only. Here individual dynamics are influenced by the averaged behavior of the crowd which can be characterized by the average velocity, flow or density.}
	{Often, for example in high density regimes, individual dynamics have little influence on the overall flow. For this reason averaged quantities are of specific interest to characterize the crowd behavior}.  While individual based models depend on a large number of parameters, which allow a detailed description of individual trajectories, their identification and 
	estimation is extremely challenging and computationally costly. Hence we focus on the identification of macroscopic relations from
	individual trajectories in the following.
	
	An important and commonly used macroscopic relation is the so-called \textit{fundamental 
		diagram}. The fundamental diagram relates an averaged observed pedestrian density to either the measured velocity or 
	outflow.  It is a well-established characteristic quantity in vehicle traffic flow theory, see \cite{lighthill1955}, but also plays an important role in pedestrian dynamics. {Examples of its use include the quantification of the capacity of pedestrian facilities or the evaluation of pedestrian models, see \cite{seyfried2005}}.
	To understand the latter we may use trajectories which are collected in controlled experiments. These experiments are conducted for various conditions -- 
	different domains (corridors, junctions, ...), uni- and bidirectional flows, varying inflow and outflow 
	conditions, in addition to considering the effects of
	diverse social and cultural backgrounds; see for example \cite{boltes2010}. In  experiments trajectory 
	recordings usually start once the experiment has equilibrated. The collected video or sensor data is used to extract individual trajectory data. 
	More recently also images from motion sensing cameras, such as Kinect have been used, see \cite{Corbetta2017}, to 
	collect individual trajectories in public spaces. The fundamental diagram is then calculated by evaluating the individual 
	trajectory data using suitable averaging techniques. In unidirectional flows different regimes can be observed -- at very low densities pedestrians walk with 
	a (maximum) velocity (denoted by $v_{\max}$ later on).
	Then the average speed decreases as the density increases. At a certain point the velocity approaches zero due to overcrowding. 
	We refer to the density at which overcrowding occurs as $\rho_{\max}$ later on.
	Different values for $\rho_{\max}$ and $v_{\max}$ can be found in the literature, ranging  from $3.8$ to $10$ pedestrians per m$^{2}$ for $\rho_{\max}$ and $0.98$ m/s to $1.5$ m/s for $v_{\max}$.
	These deviations can be explained by the experimental setup and measurement techniques used, as well as psychological 
	and cultural factors, cf. \cite{chattaraj2009,seyfried2005, zhang2012}.
	
	In this paper we consider a unidirectional flow in a corridor as illustrated in Figure \ref{f:sketch}. We assume that 
	pedestrians enter the corridor on the left through $\Gin$ and exit on the right through $\Gout$, with no possible 
	entrance or exit at the walls $\Gamma_N$.
	\begin{figure}
		\begin{subfigure}[b]{0.5\textwidth}
			\floatstyle{plain}
			\begin{tikzpicture}[scale=1.4]
			\node[inner sep = 0pt] (pedestrian1) at (1.5,1)
			{\includegraphics[width=0.05\textwidth]{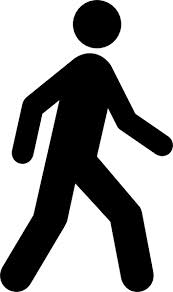}};
			\draw[thick] (1,0) -- (4,0) node [pos=0.5,below] {{\footnotesize $\Gamma_N$}} ;
			\draw[thick] (4,0) -- (4,2) node [pos=0.5,right  ] {{\footnotesize $\Gout$}} ;
			\draw[thick] (4,2) -- (1,2) node [pos=0.5,above] {{\footnotesize $\Gamma_N$}} ;
			\draw[thick] (1,2) -- (1,0) node [pos=0.5,left] {{\footnotesize $\Gin$}} ;
			\node  at (0.5,-0.10) {\small{$(0,-\ell)$}};
			\node  at (4.5,-0.1) {\small{$(L,-\ell)$}};
			\node  at (0.5,2.20) {\small{$(0,\ell)$}};
			\node at (4.5,2.20) {\small{$(L,\ell)$}};
			\draw[thick,->] (1.35,0.2) -- (1.75,0.2) node [pos=0.95,below] {{\footnotesize $x_1$}};
			\draw[thick,->] (1.35,0.2) -- (1.35,0.60)  node [pos=0.95,left] {{\footnotesize $x_2$}};
			\draw[thick,->] (1.75,1.0) -- (2.25,1.0);
			\end{tikzpicture}
			\caption{Sketch of the domain}\label{f:sketch}
		\end{subfigure}
		\floatstyle{plain}
		\begin{subfigure}[b]{0.4\textwidth}
			\includegraphics[width=\textwidth]{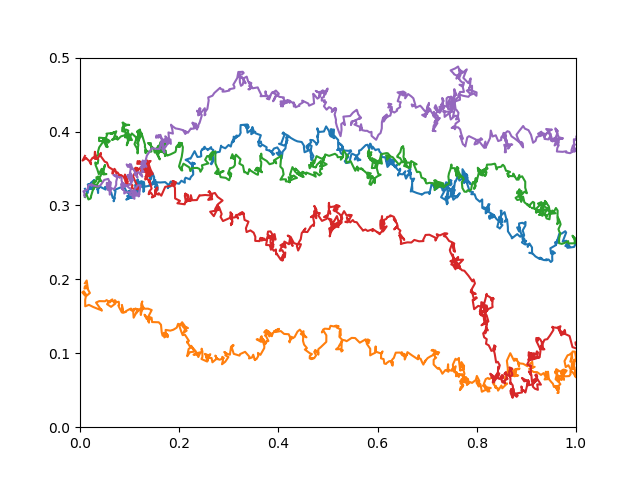}
			\caption{Sample of 5 generated trajectories.\label{f:traj-intro}}
		\end{subfigure}
		\caption{Modeling setup considered}
	\end{figure}
	The evolution of the overall density $\rho = \rho(x,t)$ can be modeled by a nonlinear Fokker--Planck equation, 
	with (nonlinear) mixed boundary conditions, describing the inflow and outflow at the entrance and exit. Here the drift term 
	accounts for the nonlinear velocity dependence on the density, as described 
	by the fundamental diagram. The scaled
	density $\rho$ allows us to describe individual trajectories $X = X(t)$ as realizations of a generalized McKean--Vlasov process.
	This introduces a coupling between the (microscopic) 
	trajectories $X$ and the (macroscopic) density $\rho$ of the system, with $X$ 
	governed by a density dependent stochastic differential equation (SDE) \eqref{eq:sde}
	and $\rho$ governed by a Fokker-Planck 
	type partial differential equation (PDE) \eqref{eq:FP}. 
	It is important to note that the density $\rho$ is not a 
	probability density function, since the total mass changes in time due to the inflow and outflow boundary conditions. 
	Rather than directly employ the empirical density of the process to perform 
	parameter estimation we instead try to learn parameters from individual
	trajectories whose guiding velocity depends on the density via the
	fundamental diagram; this avoids the difficult problem of creating
	smooth densities from collections of individual trajectories.  
	We study the coupling between the PDE and the trajectories of each pedestrian 
	with the goal of learning two parameters from individual trajectories of the resulting nonlinear Markov process: the maximum speed $v_{\max}$ and the maximum density $\rho_{\max}$. Whilst there is some work concerning
	parameter estimation of a nonlinear Markov process this is primarily
	focused on settings in which the data concerns macroscopic or mean
	properties \cite{giesecke2017inference}; see also the recent analyses
	of nonlinear Markov processes in \cite{garnier2016mean,garnier2017consensus}.
	In contrast we focus on learning
	from microscopic data; closely related work concerns the learning
	of (macroscopic) ocean properties from Lagrangian (microscopic)
	trajectories \cite{apte2008bayesian}.

	We will see that the maximum density $\rho_{\max}$ is not identifiable in our setup. The parameter $v_{\max}$ is learnable and will be
	determined using a maximum a posteriori (MAP) estimator as well as a fully Bayesian methodology. 
	Estimates such as MAP (as well as the related maximum likelihood estimator, MLE) are asymptotically unbiased and 
	asymptotically normal as long as the path observations are sufficiently long (i.e., observe the evolution of the generalized 
	McKean--Vlasov process for $T\rightarrow\infty$)~\cite{Kutoyants2004,prakasarao}. However, since  individuals leave the 
	domain after a finite time, we use multiple trajectories and show that this leads to similar properties to those found when observing one very long 
	trajectory of an SDE.

	The considered mathematical model is rather simplistic from a modeling
	perspective. At the moment we do not account for many known features 
	of individual behavior, such as collision avoidance or small group dynamics.  However, the proposed setup is an 
	appropriate model for the density of the process and still mathematically tractable. The developed framework will allow 
	us to  extend and generalize our parameter learning approach to more complicated models, geometries, and situations in the future.
	Furthermore we observe that the SDE model considered describes individual motion only. The (Brownian) noise models the
	tendency of individuals to not move along completely straight lines. However, it does not correspond to measurement errors in experiments.
	Including such errors would involve an extra equation for the trajectory observations and requires the use of 
	filtering methods. This problem could be considered in the future; however
	it is likely that the model error itself is larger than the observation error
	in such situations, and investment in refining the model is arguably
	more important than accounting for noise in the trajectory data.
	
	This paper builds a systematic methodology to identify parameters of interest 
	in systems which are modeled by SDEs which depend on their own density. 
	The methodology is robust to improvements in the model, and can be extended
	to more complex sensing of individual trajectories, and to the
	incorporation of noise into the trajectory data. {This approach can also be used to identify functions instead of parameters and quantify uncertainty in our estimates. There has been an increasing interest in developing connections between trajectory data and mathematical models; see for example \cite{bode2019emergence}.
		However, in our approach the likelihood is constructed explicitly, using properties
		of stochastic differential equations, allowing for fully Bayesian inference,
		and avoiding the need for approximate Bayesian inference,  
		as employed in the recent work by Bode et al. \cite{bode2019emergence}.}
	
	Our main contributions are the following:
	\begin{itemize}
		\item set up of parameter estimation methodologies for SDEs which depend on their 
		own density, with trajectories confined to  bounded domains via boundary conditions;
		\item discussion of identifiability in the considered model of pedestrian dynamics;
		\item development of numerical methods which allow for parameter estimation
		via optimization and for uncertainty quantification via a Bayesian approach;
		\item analyzing the impact of flow conditions (inflow vs. outflow rate, transient vs. steady state profiles) on the parameter estimation; 
		\item comparison between transient and steady state density regimes,
		providing novel insights about the applicability of the fundamental diagram in transient regimes (we recall that the fundamental diagram is obtained from equilibrated experimental
		data).
	\end{itemize}
	
	The paper is organized as follows. We start by introducing the underlying mathematical models in Section~\ref{sec:models}. 
	Next we present analytic results for the PDE and SDE models in Section \ref{sec:analysis} and introduce the parameter 
	identification problem in Section \ref{sec:estimation}. The computational complexity of the considered identification 
	problem requires efficient and robust numerical solvers for the PDE and SDE models. We discuss these solvers as well as the optimization methodologies used in Section \ref{sec:comp}.
	We conclude by discussing the identifiability of $v_{\max}$ in various density regimes and for different experimental setups with numerous computational experiments in Section \ref{sec:numerical}. 
	We present our conclusions in Section~\ref{sec:conclusion}. Further details on the analysis of the PDE will be given in the Appendix.
	
	%
	%
	
	\section{The microscopic and the macroscopic models}
	\label{sec:models}
	
	We consider the uni-directional flow of a large number of pedestrians moving in a corridor $\Omega=\{(x_1,x_2) \in [0,L] \times [-\ell,\ell]\}$ as illustrated in Figure \ref{f:sketch}. 
	On the macroscopic level the evolution of the density $\rho(x,t)$ can be described by a nonlinear Fokker--Planck (FP) equation 
	\begin{equation}
		\label{eq:FP}
		\partial_t \rho(x,t)= \Div(\Sigma \nabla \rho(x,t) -\rho(x,t) F(\rho) ),
	\end{equation}
	where the diffusion matrix is of the form $ \Sigma = \operatorname{diag}(\sigma^2_1,\sigma_2^2)$. The diffusion accounts for the tendency of pedestrians to not move along 
	perfectly straight lines, and its structure allows for different diffusivities in the vertical and horizontal direction.
	Since individuals move from the left to the right, we choose a convective field $F: \R \mapsto \R^2$ of the form
	\begin{align}\label{e:F1positive}
		F(\rho)=  f(\rho) e_1, \text{ where } f: \R^+ \mapsto \R^+ \text{ and } e_1 = \begin{pmatrix}1\\  0\end{pmatrix}.
	\end{align}
	This convective field depends on density only, and will be evaluated pointwise
	to drive individual trajectories.\footnote{Later we will replace $e_1$, the
		direction of motion,  by
		a gradient vector field to allow for more complex domains such as bottlenecks;
		this formulation will be used in the existence and uniqueness proof of the PDE as well as the bottleneck example presented in Section \ref{sec:numerical}.} 
	We choose {the velocity as }
	\begin{align}\label{eq:F}
		f(\rho) = v_{\max} \left(1-\frac{\rho}{\rho_{max}}\right),
	\end{align}
	which corresponds to the density-flow relation observed in the fundamental diagram in traffic flow and pedestrian dynamics.
	Thus the model states that all individuals want to move at a maximum speed $v_{\max}$ in the absence of other pedestrians, and the actual velocity decreases 
	linearly from this value with the density $\rho$ evaluated locally. 
	We assume that the corridor is initially empty, that is $\rho(x,0) = 0$ for all $x \in \Omega$. 
	Individuals enter at a certain rate $a$ on the left through the entrance $\Gamma_{in}$ and leave on the
	right through the exit $\Gamma_{out}$ with a rate $b$.  On the rest of the boundary we impose no flux boundary conditions. 
	Let $j = - \Sigma \nabla \rho + F(\rho) \rho$ denote the flux. Then the corresponding boundary conditions are given by
	\begin{subequations}\label{e:bc}
		\begin{align}
			\label{eq:BC-inflow} j \cdot n &= -a \bigl(\rho_{max}-\rho\bigr),&\textrm{ for all \  } (x_1,x_2)  \in \Gin, \\
			\label{eq:BC-outflow} j \cdot n &=  b \rho,  &\textrm{ for all } (x_1,x_2)  \in \Gout,  \\
			\label{eq:BC-vertical} j \cdot n &= 0,   &\textrm{ for all \ } (x_1,x_2)  \in \Gamma_N,
		\end{align}
	\end{subequations}
	where 
	\begin{align*}
		\Gin &= \left\{(x_1,x_2) \in \Omega : \: x_1 = 0\right\}, ~\Gout = \left\{(x_1,x_2) \in \Omega : \: x_1 = L\right\}\\
		\Gamma_N &= \left\{(x_1,x_2) \in \Omega : \: x_2 = \pm \ell\right\}
	\end{align*} 
	and $n$ denotes the unit outer normal vector.
	Note that the inflow condition \eqref{eq:BC-inflow} includes the additional factor $\left(\rho_{max}-\rho\right)$ due 
	to volume exclusion. This prefactor arises in the formal limit, when particles are only allowed to enter or move to a 
	certain position if enough physical space is available, see \cite{Wood2009}.
	We note that $a$ and $b$ are rates of entrance and exit in the domain. Balancing the left- and righthand sides of equations~\eqref{eq:BC-inflow} and~\eqref{eq:BC-outflow} 
	gives us their units -- {both $a$ and $b$} are in $m/s$. 
	A small modification of the maximum principle calculations in \cite{Burger2016} shows that it is sufficient for the problem to be well posed that $0\leq a,b\leq v_{\max}$.	
	
	{In the following we will see that trajectory data generated
		by the model \eqref{eq:FP} is independent of the parameter $\rho_{max}$.}  Let $\tilde{\rho} = \frac{\rho}{\rho_{max}}$, then equation \eqref{eq:FP} can be rescaled as
	\begin{subequations}\label{e:system-rescaled}
		\begin{align}
			\label{e:FP-rescaled}\partial_t \tilde\rho = \Div\left(\Sigma\nabla\tilde\rho - {\tilde\rho}{\tilde F}({\tilde\rho})\right)
		\end{align}
		where 
		\begin{align}\label{e:F1positive2}
			{\tilde F}(\tilde\rho)=  {\tilde f}(\tilde\rho) e_1, \quad
			{\tilde f}(\tilde\rho) = v_{\max} \left(1-{\tilde \rho}\right).
		\end{align}
		The system is supplemented with the boundary conditions
		\begin{align}
			\label{eq:BC-inflow-rescaled} \tilde{j} \cdot n &= -a\bigl(1-\tilde\rho\bigr),&\textrm{ for all \  } (x_1,x_2)  \in \Gin, \\
			\label{eq:BC-outflow-rescaled} \tilde{j} \cdot n &= b \tilde\rho,  &\textrm{ for all } (x_1,x_2)  \in \Gout,  \\
			\label{eq:BC-vertical-rescaled} \tilde{j} \cdot n &= 0,   &\textrm{ for all \ } (x_1,x_2)  \in \Gamma_N.
		\end{align}
	\end{subequations}
	Here the scaled flux is given by 
	$\tilde{j} = -\Sigma\nabla\tilde\rho + v_{max}(1-\tilde\rho)\tilde\rho e_1$. 
	We see that the maximum density $\rho_{\max}$ is not present in the scaled formulation. Furthermore the convective field $F$, which governs individual
	trajectories, is given by \eqref{e:F1positive}, \eqref{eq:F} and can be
	expressed entirely in terms of $\tilde \rho$, with no reference to $\rho_{max}.$
	
	{Our stated aim is to identify $v_{\max}$ and $\rho_{max}$ in $f(\rho)$ \eqref{eq:F} using individual trajectories. However the preceding argument shows
		that the parameter $\rho_{\max}$ does not influence the SDE for trajectories. Since
		our parameter inference is based only on these trajectories this means that $\rho_{max}$
		cannot be learned from the data available to us. 
		Hence we make the following remark:}
	
	\begin{remark}
		\label{rem:ref}
		The parameter $\rho_{max}$ can not be identified within our adopted 
		microscopic macroscopic data-model framework. {This limitation is not caused by the identification methodologies proposed, but rather by the invariance of the 
			PDE-SDE model to scaling in $\rho$ and the fact that the measured trajectory data
			is independent of the value of $\rho_{\max}.$ Indeed we initially conducted numerical 
			experiments using the unscaled model, which led to the understanding
			that $\rho_{\max}$ is not identifiabile. Similar identifiability issues are well-known in the literature relating
			to inference for diffusion processes; see for example the paper 
			\cite{roberts2001inference} in which
			it is shown that the quadratic variation of sample paths cannot contain information
			about time-rescaling.} 
	\end{remark}
	
	{For this reason the rest of this paper is based only on the scaled Fokker--Planck 
		equation for $\tilde\rho$. However we wish to drop the $\sim$ notation for ease 
		of presentation. This corresponds to using equations 
		\eqref{eq:FP}--\eqref{e:bc} with $\rho_{\max}=1.$ Since the preceding arguments
		show that $\rho_{\max}$ is not identifiable from trajectory data, making {\em any}
		specific choice of $\rho_{\max}$, including the value $1$, will not affect
		the inference for $v_{\max}.$ The parameter $\rho_{\max}$ thus
		plays no further part in the paper.}

	Our focus, then, is on identification of $v_{\max}$. We will use the scaled Fokker--Planck equation \eqref{e:system-rescaled} in the following, which coincide with the system \eqref{eq:FP}--\eqref{e:bc} when setting $\rho_{max}=1$.

	The existence of steady states as well as the different stationary regimes -- so-called influx limited, outflux limited and maximum current regime -- are
	discussed by 
	Burger and Pietschmann in \cite{Burger2016}. We present corresponding
	existence results for the time dependent problem in Section \ref{sec:analysis}.
	Note that the steady state as well as the time dependent solutions {{ of~\eqref{e:system-rescaled}} satisfy  $0\leq\rho\leq 1$, which ensures that
	$f$ stays non-negative for all $x \in \Omega$ and times $t > 0$.
	This also allows us to define individual trajectories as realizations of the following generalized McKean--Vlasov equation
	\begin{equation}
		\label{eq:sde}
		dX(t)=F\bigl(\rho(X(t),t)\bigr) dt +\sqrt{2\Sigma} dW(t),
	\end{equation}
	where $W(\cdot)$ is a unit Brownian motion and $\rho$ {{solves
		equation \eqref{e:system-rescaled}. }
	These realizations correspond to individual trajectories as we see in Figure~\ref{f:traj-intro}. 
	
	In the presented PDE model the boundary conditions drive the dynamics of the process. Hence they need to be included at the SDE level as well if we
	wish for self-consistency. Boundary conditions for SDEs are a delicate issue and a general existence theory is not available in closed domains. It is out of the scope of this paper to state the SDE problem in a rigorous manner (e.g., using local time~\cite{Skorokhod}). However, it is important to implement the boundary conditions in a consistent way with the PDE, and we discuss
	their implementation in more detail in Section~\ref{s:SDE}.
	Once this is done we note that the scaled FP equation is self-consistent with the forward Kolmogorov equation for this SDE.
	Indeed any sufficiently regular solution $\rho$ of the Fokker--Planck equation 
	\eqref{e:system-rescaled} can be used as a drift in \eqref{eq:sde}. The corresponding Fokker--Planck equation
	is then a linear PDE with the same solution as the nonlinear PDE \eqref{e:system-rescaled}.
	See \cite[Thm. 1]{oelschlager1984martingale},\cite[Thm. 7.3.1]{ermakov1989random},\cite{dawson1983critical,gartner1988mckean} for theory relating to such PDEs
	in related settings.

	\section{Analysis of the model}
	\label{sec:analysis}
	In this section we discuss the analysis of the generalized McKean--Vlasov SDE~\eqref{eq:sde} as well as the {{ rescaled} Fokker--Planck equation \eqref{e:system-rescaled}. 
	We start with  the existence and regularity results for the Fokker--Planck 
	equation, as they ensure the well-posedness of the corresponding generalized McKean--Vlasov process.
	{{ We re-emphasize the important fact, discussed prior to
		Remark \ref{rem:ref}, that \eqref{e:system-rescaled} is identical to 
		equation \eqref{eq:FP}--\eqref{e:bc}, with $\rho_{max}=1$.} 
	
	\subsection{Analysis of the Fokker--Planck equation}
	
	The nonlinear boundary conditions \eqref{eq:BC-inflow-rescaled} and \eqref{eq:BC-outflow-rescaled} strongly influence the time dependent and steady state solutions. 
	The relation of the maximum velocity $v_{\max}$ to the inflow and outflow
	parameters $a$ and $b$ defines different stationary regimes, in which boundary layers arise at the entrance and/or the exit.\\
	The following regimes were introduced in~\cite{Wood2009} and were characterized and analyzed by Burger and Pietschmann in \cite{Burger2016}:
	
	\begin{enumerate}[label=(\arabic*),leftmargin=25pt,parsep=0pt]
		
		\item \textit{Influx limited phase} in the case $a < b$ and $\min\left(a,b\right)<\frac{v_{\max}}{2}$. We observe an asymptotically low density 
		($\rho < \frac{1}{2}$) and a boundary layer at the exit.
		
		\item \textit{Outflux limited phase} for $a > b$ and $\min\left(a,b\right)<\frac{v_{\max}}{2}$. This creates an 
		asymptotically high density ($\rho > \frac{1}{2}$) and a boundary layer at the entrance.
		
		\item \textit{Maximal current phase} if $a, b\geq \frac{v_{\max}}{2}$. Here we 
		observe an asymptotic density with $\rho  \approx \frac{1}{2}$ in most of 
		the domain when $\sigma$ is small and 
		boundary layers at the entrance and exit. In the case $a = b = \frac{v_{\max}}{2}$,  the unique steady state is given by $\rho(x) = \frac{1}{2}$.
	\end{enumerate}
	\begin{figure}
		\begin{center}
			\includegraphics[width=0.3\textwidth]{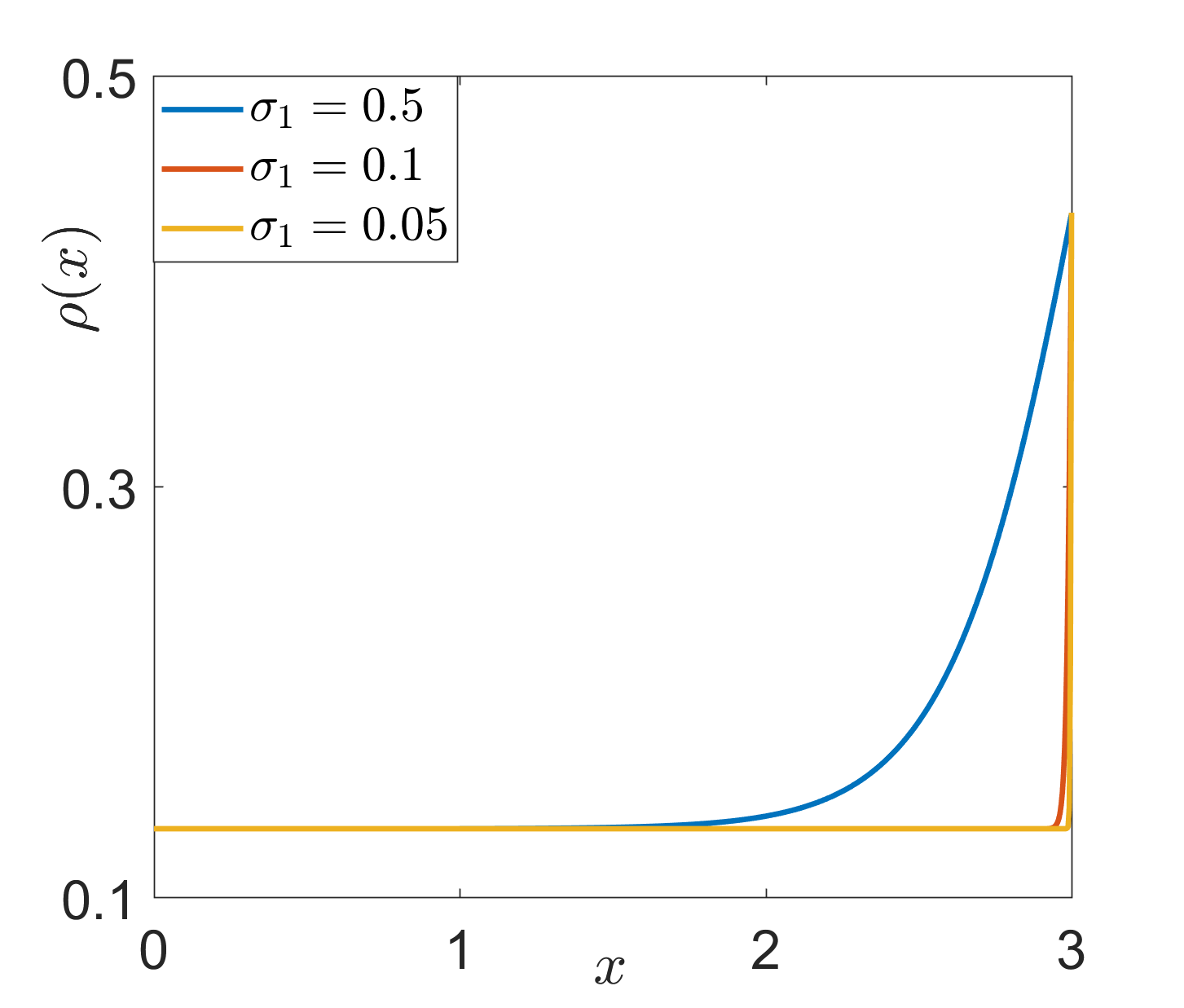}
			\includegraphics[width=0.3\textwidth]{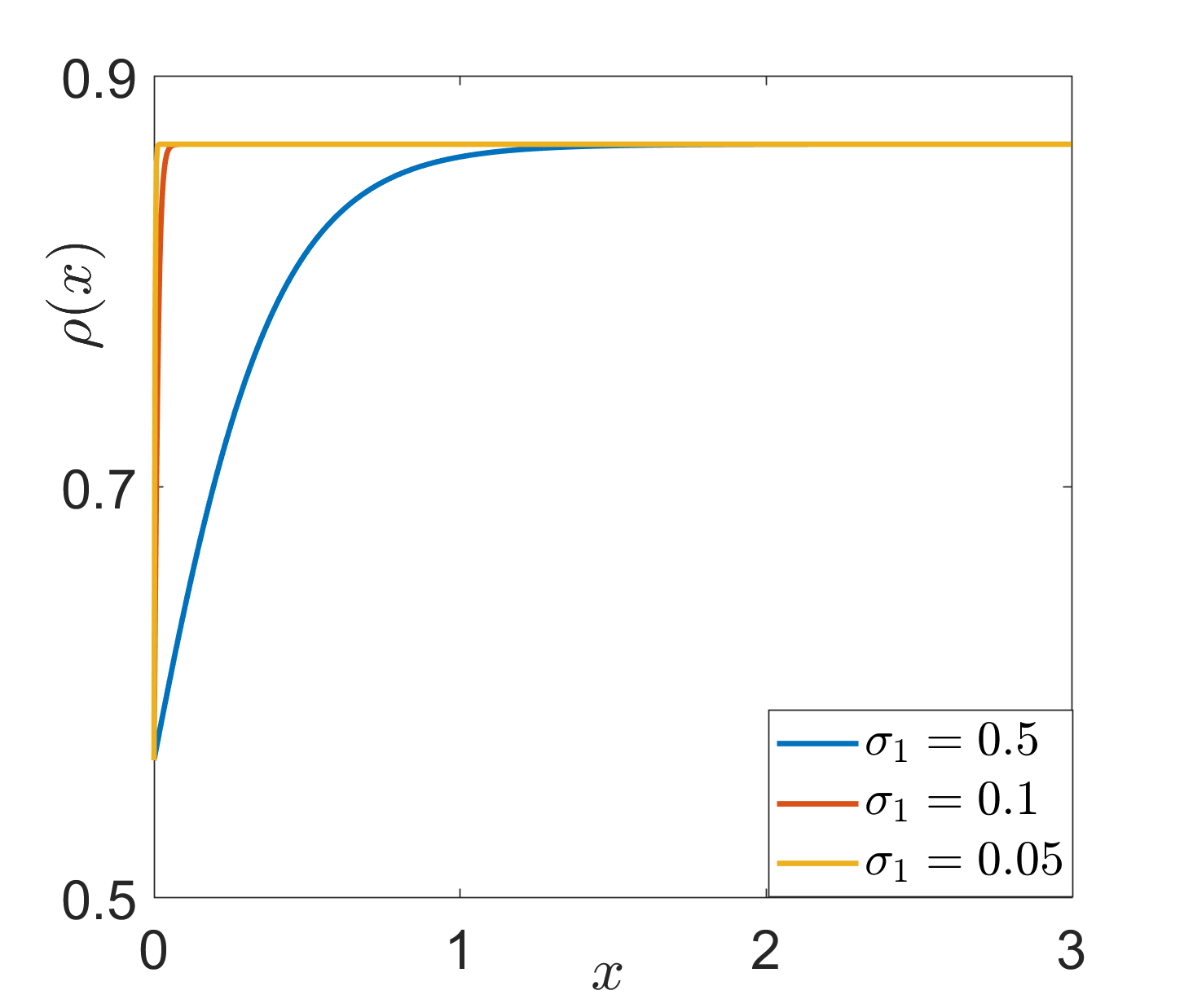}
			\includegraphics[width=0.3\textwidth]{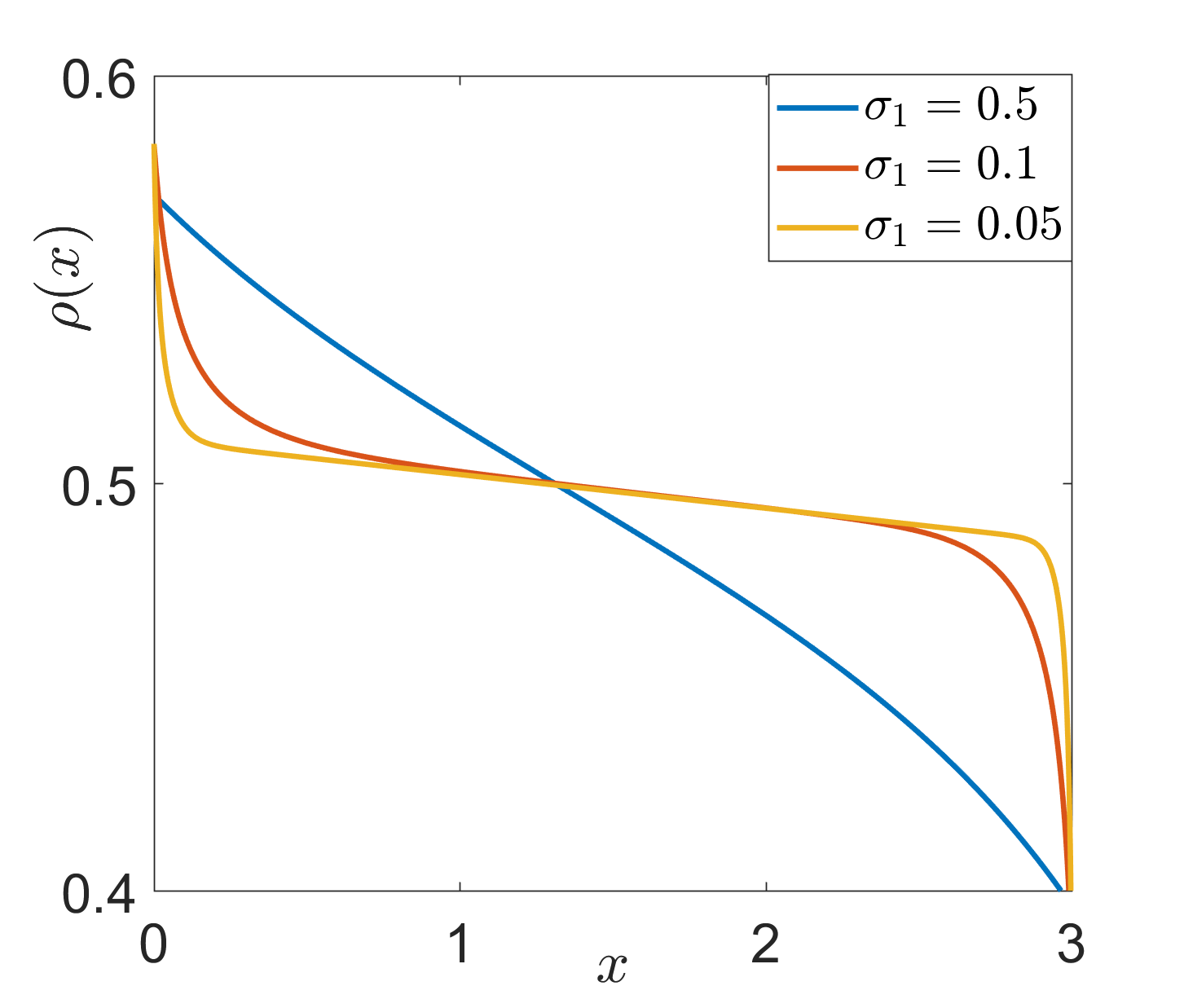}
			\caption{Examples of the steady states of the Fokker--Planck equation in one dimension for $\sigma = 0.5, \, 0.1, \, 0.05$ and (left) the influx limited phase
				-- $a = 0.2,\, b = 0.4$, (center) the outflux limited phase -- $a= 0.4,~b = 0.2$, and (right) the maximal current phase -- $a = 0.9,\, b = 0.975$. }\label{f:stationary_regimes}
		\end{center}
	\end{figure}
	
	Figure \ref{f:stationary_regimes} illustrates the different regimes in 1D in the case of a corridor of length $L=3$, $v_{\max}=1.5$ and different values of $\sigma = \sigma_1$. We observe the predicted
	boundary layers at the entrance and/or exit, whose width is determined by the diffusivity $\sigma$. The existence of at least one steady state solution under suitable assumptions on the in- and outflow boundary conditions as well as a more
	general regular potential was proven in \cite{Burger2016}. Note that these steady state profiles are unique in 1D.
	
	In the following we state and discuss the respective existence result for the time dependent problem. 
	The proof uses similar techniques as in \cite{Burger2016} and can be found in the Appendix.
	Without loss of generality we assume $\sigma_1 = \sigma_2 = 1$ and 
	$v_{\max} = 1$. 
	Furthermore, we consider the more general case in which $e_1$ is replaced by $\nabla V$ in \eqref{e:F1positive}. We make the following assumptions:
	\begin{enumerate}[label=(A\arabic*)]
		\item $\Omega \subset \mathbb{R}^n$ bounded, $n=1,2,3$, with boundary $\partial \Omega \in C^2$. \label{a:domain}
		\item $0 \leq a, b \leq 1$, and $\min\lbrace a, b \rbrace < 1$. \label{a:alphabeta}
		\item $V \in W^{1,\infty}(\Omega)$ and $\Delta V = 0$. \label{a:potential}
	\end{enumerate}
	Assumption \ref{a:domain} actually excludes domains like the corridor shown in Figure~\ref{f:sketch}. We still employ the boundary conditions 
	\eqref{eq:BC-inflow-rescaled}--\eqref{eq:BC-vertical-rescaled}, simply assuming that the union of (the disjoint)
	$\Gamma_{in}, \Gamma_{out}$ and $\Gamma_N$
	comprise the entire boundary $\partial \Omega.$ 
	One could consider a rectangular domain with rounded off corners instead,
	but since we did not observe any issues in the computational experiments with
	corners, we retained them for a more realistic simulation setting. 
	
	We seek weak solutions in the set
	\begin{align}\label{e:solset}
		\mathcal{S} = \lbrace \rho \in \mathbb{R}: ~ 0 \leq \rho \leq 1 \rbrace,
	\end{align}
	and refer to its interior as $\mathcal{S}^0$. Then the weak solution $\rho: \Omega \times [0,T] \rightarrow \mathcal{S}$ satisfies
	\begin{align}\label{eq:weakFP}
		\int_0^T \left[\langle \partial_t \rho, \varphi \rangle_{H^{-1}, H^1} - \int_{\Omega} j \cdot \nabla \varphi~ dx - a \int_{\Gamma_{in}} (1-\rho) \varphi ~ds + b \int_{\Gamma_{out}} \rho \varphi~ ds \right]dt = 0,
	\end{align}
	for all test functions $\varphi \in L^2(0,T;H^1(\Omega))$. 
	\begin{theorem}\label{t:globalexistence}
		Let assumptions \ref{a:domain}-\ref{a:potential} be satisfied. Let $\rho_0: \Omega \rightarrow \mathcal{S}^0$ be a measurable function such that {the initial entropy $E$ defined by \eqref{e:entropy}} satisfies $E(\rho_0) < \infty$. Then there exists a 
		weak solution to system \eqref{e:system-rescaled}  in the sense of \eqref{eq:weakFP} satisfying
		\begin{align*}
			\partial_t \rho \in L^2(0,T; H^1(\Omega)^*)\\
			\rho \in L^2(0,T; H^1(\Omega)).
		\end{align*}
	\end{theorem}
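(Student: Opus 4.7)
The plan is to prove existence via an entropy method combined with a regularization scheme, adapting the steady-state argument of Burger--Pietschmann to the time-dependent setting. The first step is to exploit the exclusion structure of the flux. Under assumption \ref{a:potential}, with $\Sigma = I$ and $v_{\max}=1$, the flux admits the compact form
\begin{equation*}
j = -\nabla\rho + \rho(1-\rho)\nabla V = -\rho(1-\rho)\nabla u, \qquad u = \log\rho - \log(1-\rho) - V,
\end{equation*}
which is the standard Cahn--Hilliard/exclusion entropy variable. The natural entropy is then
\begin{equation*}
E(\rho) = \int_\Omega \bigl[\rho\log\rho + (1-\rho)\log(1-\rho) - \rho V\bigr]\,dx,
\end{equation*}
and formally one obtains the dissipation identity
\begin{equation*}
\frac{d}{dt}E(\rho) + \int_\Omega \rho(1-\rho)|\nabla u|^2\,dx = a\!\int_{\Gin}\!(1-\rho)(\log\rho{+}V)\,ds - b\!\int_{\Gout}\!\rho\log\tfrac{\rho}{1-\rho}\,ds,
\end{equation*}
whose boundary contributions are controlled using $0\leq a,b\leq 1$, Young's inequality and $V\in W^{1,\infty}$.

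Second, I would construct approximations $\rho_\varepsilon$ by replacing $\log\rho$ and $\log(1-\rho)$ with $\log(\rho+\varepsilon)$ and $\log(1-\rho+\varepsilon)$ (equivalently, truncating the mobility $\rho(1-\rho)$ away from $0$ and $1$), combined either with a Galerkin projection or an implicit Euler time-discretization in which each time step is the Euler--Lagrange equation of a strictly convex minimization problem, giving existence and uniqueness at the approximate level together with the box constraint $\rho_\varepsilon\in\mathcal S^0$ by a maximum-principle argument of the type used in \cite{Burger2016}.

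Third, I would pass to the limit. The entropy identity for $\rho_\varepsilon$ yields uniform bounds on $\sqrt{\rho_\varepsilon(1-\rho_\varepsilon)}\,\nabla u_\varepsilon$ in $L^2$, and hence on $\nabla\rho_\varepsilon$ in $L^2(0,T;L^2(\Omega))$, since $\nabla\rho_\varepsilon = \rho_\varepsilon(1-\rho_\varepsilon)\nabla u_\varepsilon + \rho_\varepsilon(1-\rho_\varepsilon)\nabla V$ and $\rho_\varepsilon(1-\rho_\varepsilon)$ is bounded by $1/4$. Testing the weak formulation \eqref{eq:weakFP} against $\varphi\in L^2(0,T;H^1(\Omega))$ and using the trace theorem together with the boundedness of $\rho_\varepsilon$ gives $\partial_t\rho_\varepsilon\in L^2(0,T;H^1(\Omega)^*)$ uniformly. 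Aubin--Lions then provides strong convergence of $\rho_\varepsilon$ in $L^2(0,T;L^2(\Omega))$ and, via trace compactness, of $\rho_\varepsilon|_{\partial\Omega}$ in $L^2(0,T;L^2(\partial\Omega))$. This suffices to pass to the limit in the linear interior term, in the quadratic nonlinearity $\rho_\varepsilon(1-\rho_\varepsilon)\nabla V$ (product of strong $L^2$ and bounded $L^\infty$), and in the boundary integrals, yielding a weak solution $\rho$ with the claimed regularity and with $0\leq \rho\leq 1$ inherited from the approximations.

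The main obstacle I would expect is the rigorous justification of the entropy dissipation identity on the approximate problem with the genuinely nonlinear mixed boundary conditions, and in particular extracting a useful gradient bound despite the degeneracy of the mobility $\rho(1-\rho)$ at the two boundary values of $\mathcal S$. The assumption $\min\{a,b\}<1$ in \ref{a:alphabeta} is what rules out the borderline case in which the boundary production of entropy exactly cancels the interior dissipation; ensuring that the resulting strict dissipation survives the regularization parameter $\varepsilon\to 0$, and produces bounds independent of $\varepsilon$ up to the boundary, is the technical heart of the proof. All other steps are comparatively routine modifications of the stationary analysis in \cite{Burger2016}.
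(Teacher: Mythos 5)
Your proposal is correct in outline and follows the same overall strategy as the paper: pass to the entropy variable $u=\log\rho-\log(1-\rho)-V$ with mobility $m(\rho)=\rho(1-\rho)$, derive an entropy dissipation inequality whose interior term controls $\|\nabla\rho\|_{L^2}$ because $m\le \tfrac14$, bound $\partial_t\rho$ in $L^2(0,T;H^1(\Omega)^*)$ from the weak form, and conclude by Aubin--Lions. The genuine difference is the regularization. The paper discretizes in time by implicit Euler and adds an elliptic regularization $\tau\Delta u_k$ \emph{in the entropy variable}, solving each step via the stationary theory of \cite{Burger2016}; the box constraint $0\le\rho_k\le 1$ then comes for free from the inverse map $\rho=e^{u+V}/(1+e^{u+V})$, and no maximum principle is ever invoked. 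Your $\varepsilon$-shift of the logarithms (equivalently, truncating the mobility) destroys this automatic confinement and obliges you to prove a maximum principle for the regularized problem and to track $\varepsilon$-independence of the bounds up to the boundary --- workable, but this is exactly the step the entropy-variable substitution is designed to bypass, and you correctly flag it as the hardest part of your route. Two smaller corrections: the boundary production terms in the dissipation identity should read $a\int_{\Gin}(1-\rho)\,u\,ds-b\int_{\Gout}\rho\, u\,ds$ with the full $u$ (your explicit formula drops pieces), and the paper controls them by splitting each into a non-negative Kullback--Leibler--type integrand plus a term bounded via the trace of $V$ --- the hypothesis $\min\{a,b\}<1$ plays no role there and enters only through the existence theorem for the elliptic time steps. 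Finally, the degeneracy of $m$ at $\rho\in\{0,1\}$ is not an obstacle to the gradient bound but a help, since $\int_\Omega m|\nabla u|^2\,dx \ge \tfrac12\int_\Omega |\nabla\rho|^2/m\,dx - \int_\Omega m|\nabla V|^2\,dx \ge 2\int_\Omega|\nabla\rho|^2\,dx - \tfrac14\int_\Omega|\nabla V|^2\,dx$.
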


	In the preceding theorem, the star denotes the dual operation.
	The existence proof as well as improved regularity results can be found in the Appendix.

	\subsection{Analytic considerations on the generalized McKean--Vlasov equation}
	The analytic results for the Fokker--Planck equation~\eqref{e:system-rescaled} would allow us to prove existence and uniqueness of strong solutions for~\eqref{eq:sde} 
	in $\mathbb{R}^2$. However, since we consider the setup of a corridor with a mix of boundary conditions (reflected in the walls, and partially reflected in the entrance and exit), 
	standard results are not applicable. The only available results for SDEs with boundaries are slight generalizations of Skorokhod's problem~\cite{Skorokhod} and can only be used for 
	SDEs in the half plane. We are not aware of any results for more complicated geometries. However, our numerical experiments show that a careful discretization 
	of the process, and in particular treatment of trajectories impinging on
	the boundary of $\Omega$, yields bounded and continuous solutions of equation~\eqref{eq:sde}.

	\section{Parameter estimation}
	\label{sec:estimation}
	In this section we introduce the  parameter estimation framework for SDEs which depend on their density. We will use a Bayesian approach based on sampling from a posterior distribution, and confirm our results with the 
	computation of a maximum a posteriori (MAP) estimator.  We recall that our initial goal was to estimate both $\rho_{\max}$ and $v_{\max}$. However, as we have seen, determination of $\rho_{\max}$ is not possible using the current setting and only
	determination of $v_{\max}$ is feasible.
	
	We estimate $v:=v_{\max}$ from a collection of sample paths $\{X_i(t)\}_{t \in [0,T]}^{i=1,\dots,J}$ which are realizations of the McKean--Vlasov equation~\eqref{eq:sde}. 
	In what follows we parameterize  $f(\cdot)$ by $v$ and write $f(\cdot;v)$. For ease of presentation, we discuss the estimation from a single trajectory before generalizing it for multiple trajectories. 
	
	Let $X=X(t)$ be a realization of \eqref{eq:sde}. Throughout the remainder
	of the paper we use the notation
	$$|\cdot|_{A}=|A^{-\frac12}\cdot|$$
	where $|\cdot|$ is the Euclidean norm and $A$ any positive-definite
	symmetric matrix; a corresponding inner-product may be defined by
	polarization. Since ${\dot{W}}$ is white, it is intuitive that
	finding the best value of the parameter $v$, given an observation of a trajectory $X$, corresponds to minimizing the function
	\begin{equation}
		\label{eq:phi}
		\Phi(v;X)= \frac14\int_0^T |\dot{X}-F\bigl(\rho(X(t),t);v\bigr)|^2_{\Sigma},
	\end{equation}
	over all possible values of $v$. However, the function $\Phi(v;\cdot)$ is almost surely infinite. In order to avoid this problem, we note that we can write
	\begin{equation}
		\label{eq:phi-to-psi}
		\Phi(v;X)=\Psi(v;X) +\frac{1}{4} \int_0^T |\dot{X}|^2 \ dt,
	\end{equation}
	where the function $\Psi$ is defined as
	\begin{equation}
		\label{eq:psi}
		\Psi(v;X)=\frac14\int_0^T \Bigl(|F\bigl(\rho(X(t),t);v\bigr)|^2_{\Sigma} dt-
		2\langle F\bigl(\rho(X(t),t);v\bigr), dX(t) \rangle_{\Sigma}\Bigr).
	\end{equation}
	Note that the last term in $\Psi$ is an It\^o stochastic integral. 
	It is preferable to perform the parameter estimation using $\Psi$ 
	instead of $\Phi$, since the latter is infinite almost surely, whilst the
	former is finite almost surely, and since $\int_0^T|\dot{X}|^2 \ dt $ does not depend explicitly on $v$ both 
	cases yield the same results when applied in discretized form.
	
	We will also add prior information and
	perform the parametric estimation of $v_{\max}$ by minimizing 
	\begin{equation}\label{eq:J}
		\mathcal{J}(v;X):=\Psi(v;X)+\frac{1}{2c}|v-m|^2,
	\end{equation}
	subject to the constraint that $v$ is positive. {The parameters $m$ and $c$
		correspond, respectively, to a prior estimate of the velocity and the variance
		associated with this estimate. This optimization problem will now be derived
		through a detailed description of the Bayesian formulation of inversion. 
		Specifically we define precisely the Bayesian problem for the posterior distribution
		of $v$ given a trajectory $\{X(t)\}_{t \in [0,T]}$; this posterior distribution
		is maximized at the constrained minimization problem just defined.}  
	
	In this Bayesian context 
	the functional $\mathcal{J}$ can be interpreted as follows.
	The first term measures the misfit between the observed 
	trajectory and the predicted density regime obtained from the FP equation. 
	The second term is a regularization term, which is weighted by prior knowledge on $v$. In particular, we assume that $v$ is normally distributed with mean $m$ and variance $c$, conditioned to be positive.
	The probabilistic interpretation of $\mathcal{J}$ is as follows. The 
	function $\exp(-\mathcal{J}(v;X))\mathbb{1}(v>0)$, appropriately normalized, is a probability distribution and in fact the posterior distribution, $\mathbb{P}(v|X)$, of $v$ given a realization $X$. This conditional distribution can be obtained, up to a normalization constant which is independent of $v$,
	by computing the joint probability of the process $X$ and the random variable $v$, which is given by
	\begin{equation*}
		\mathbb{P}(X,v) = \mathbb{P}(v|X)\mathbb{P}(X);
	\end{equation*}
	thus
	\begin{equation}\label{e:joint}
		\mathbb{P}(v|X) \propto \mathbb{P}(X,v),
	\end{equation}
	where the constant of proportionality is independent of $v$.
	Consider the probability measure $\mathbb{\bblambda} = \mathbb{Q}\otimes\operatorname{Leb}(\R)$, where $\mathbb{Q}$ is the law of the Brownian motion driving the process and $\operatorname{Leb}(\R)$ is the Lebesgue measure in $\R$. Lemma 5.3 in \cite{HSVP07} states that 
	\begin{equation}\label{e:jointderivative}
		\frac{d\mathbb{P}}{d\bblambda} (X,v)= \frac{d\mathbb{P}}{d\mathbb{Q}}(X|v)\pi_0(v),
	\end{equation}
	where $\pi_0$ is the prior Lebesgue density for $v$. 
	Using Girsanov's Theorem~\cite{Elworthy1982,Oksendal1998}, we see that
	\begin{equation}\label{eq:Girsanov-sde}
		\frac{d\mathbb{P}}{d\mathbb{Q}}(X|v) = \exp(-\Psi(v;X)),
	\end{equation}
	where $\Psi$ is defined in~\eqref{eq:psi}. If we choose $\pi_0(dv) = \mathbb{1}(v>0)N(m,c)(dv)$, which ensures that the velocity is almost surely
	positive both {\em a priori} and hence {\em a posteriori}, 
	equation \eqref{e:joint} gives
	\begin{equation}
		\label{e:p}
		\mathbb{P}(v|X) \propto \exp\Bigl(-\Psi(v;X) - \frac{1}{2c}|v-m|^2\Bigr)\mathbb{1}(v>0),
	\end{equation}
	which is exactly $\exp(-\mathcal{J}(v;X))\mathbb{1}(v>0)$.
	
	We can either sample from the probability distribution
	given by \eqref{e:p} (the Bayesian approach) or we can simply
	minimize the negative log likelihood of the data $\mathcal{J}(v;X)$, 
	subject to $v$ being positive.
	While the fully Bayesian framework allows us to quantify the uncertainty on our estimate, the minimization approach only gives 
	a single point estimator. It is however, much cheaper to simply
	minimize.  In the following sections we will compare the results of the 
	two approaches by sampling from the posterior distribution using 
	the pCN algorithm, and minimizing the function $\mathcal{J}$ using the Nelder--Mead algorithm. Both methods are derivative free, so they do not involve any extra function evaluation due to computing derivatives. We 
	note that other derivative free samplers or optimizers could be used.
	
	The accuracy of the aforementioned estimators can be expected to
	improve with the lifetime of the observations \cite{prakasarao}.
	Since in our setting  trajectories terminate when they exit the domain, we are only able to observe them for a finite time. 
	We will see that using multiple trajectories to estimate parameters
	has similar desirable properties to observing a single trajectory
	of a standard (boundaryless) SDE over a long time horizon.
	
	We denote a family of such trajectories by $X_j(t), \, j=1,\dots,J$. 
	We assume that each trajectory $X_j$ starts at $t_0^j$ with $X_j(t_0^j) = X_j^0$ and exits at time $t = t_f^j$. 
	We set $F(\rho(X_j(t),t);v_{\max}) = 0$ for $t\notin[t_0^j,t_f^j]$, which allows us to define
	\[
	\psi_j(v;X_j(t)) = \frac{1}{4}\int_0^T \Bigl(|F\bigl(\rho(X_j(t),t);v\bigr)|^2_{\Sigma} \ dt-
	2\langle F\bigl(\rho(X_j(t),t);v\bigr), dX_j(t) \rangle_{\Sigma}\Bigr).
	\]
	Then we can sum the misfit of all trajectories and obtain
	\begin{equation}\label{eq:Psi_mult_traj}
		\Psi_J\left(v;X_1,\dots,X_J\right) = \sum_{j=1}^J \psi_j(v;X_j).
	\end{equation}
	Since all $\psi_j$ are independent, we can apply the previous argument, replacing $\Psi(v,X)$ by \eqref{eq:Psi_mult_traj}. The averaging over all the trajectories has the same effect as considering large values of $T$ for a single trajectory. This can be shown formally, using techniques similar to those used in~\cite{Kutoyants2004} for a single trajectory.
	
	It could be of interest to also estimate the diffusion coefficient matrix $\Sigma$. Estimating constant diffusion coefficients for sufficiently frequent 
	observations is, in theory, a well-understood problem, 
	see \cite{Iacus2009,prakasarao}. 
	However our model is likely to be inconsistent with the data at small scales
	and so it is important to appreciate that estimation of $\Sigma$ might
	well be a non-trivial problem, requiring techniques such as those
	introduced in \cite{pavliotis2007parameter}.
	%
	%
	\section{Computational methods}
	\label{sec:comp}
	In this section we discuss the discretization of the nonlinear PDE \eqref{e:system-rescaled} and SDE 
	\eqref{eq:sde} before continuing with parameter estimation in the
	following section. Note that the parameter estimation is computationally 
	costly in the sense that it involves the solution of a PDE in every 
	iteration of the sampling and optimization algorithms. Motivated by 
	relevant experimental settings (noting that data collection starts after the experiment is equilibrated), we explore the effect of using both
	stationary and transient density regimes on the quality of our estimates. Using steady state density profiles also has the advantage of a lower computational cost. However we will show that the steady state regimes do not produce consistent estimates across all parameter regimes, an effect which is mitigated
	by using (the more expensive) time-dependent density based estimation.

	\subsection{The time dependent and steady state Fokker--Planck solvers}
	
	The time dependent solver is based on the decomposition of \eqref{e:FP-rescaled} into a diffusive and a convective part. 
	For the nonlinear convective operator an upwind discontinuous Galerkin (DG) method and an explicit time integration scheme 
	is used. For the discretization of the linear diffusive part we use a hybrid discontinuous Galerkin (HDG) method and an 
	implicit time discretization. This additive time splitting, also known as Implicit-Explicit (IMEX) method, allows to 
	treat the stiff diffusive term implicitly while the nonlinear non-stiff hyperbolic problem is solved explicitly, see \cite{ascher1995implicit}.
	The IMEX scheme allows for larger time steps, while the (H)DG discretization ensures stability and parallelizablilty.

	In some cases we employ
	1D steady state profiles $\rho_s = \rho_s(x)$ (extended as constants in the
	orthogonal direction) and these are calculated using an $H^1$ conforming 
	finite element discretization. We use a damped Newton method
	to solve the resulting nonlinear system, with an initial guess depending on the relation of $a$ and $b$. Both solvers use the finite element software package Netgen/Ngsolve, see \cite{ngsolve}.	
	
	\subsection{SDE solver and trajectory generation}\label{s:SDE}
	
	We approximate all trajectories of \eqref{eq:sde}
	by using the explicit Euler-Maruyama scheme~\cite{Higham2001}, which is a forward Euler time-stepping method. It consists of 
	defining the approximation $X_k \approx  X(t_k): = X(k\Delta t)$ and updating the individual position as
	\begin{equation}\label{eq:E-M}
		X_{k+1} = X_k + F(\rho(X_k,t_k)) \Delta t + \sqrt{2\Sigma\Delta t}\xi_k,
	\end{equation}
	where $\xi_k\sim N(0,1)$ and $\rho$ is the solution to the Fokker--Planck equation (which we advance in time simultaneously). 
	To generate trajectories using the steady state profiles, we replace $\rho(X_k,t_k)$ by the steady state solution $\rho_s(X_k)$. 
	
	Since the diffusion coefficient is constant, the Euler-Maruyama scheme has strong order of convergence one~\cite{Higham2001}, and we do not need to use more complicated time-stepping methods. 
	At the boundaries we implement individual rules consistent with ~\eqref{e:system-rescaled}. Singer \emph{et al}~\cite{Singer2008} and Erban and Chapman~\cite{Erban2007} derived appropriate
	scaled rates of entrance and exit for partially reflected boundaries in the half plane. These ensure that the Robin boundary conditions are satisfied by the corresponding probability densities. Even though it is not established 
	that their results are directly applicable in the case of the more 
	complex geometries we use here, we nonetheless follow their approach, 
	and implement the entrance and exit behavior as 
	described in Table~\ref{tab:bcs}.
	\begin{table}
		\begin{center}
		\begin{tabular}{ | M{2.75cm} | M{2.75cm} | M{2.75cm} | M{3cm} |}
			\hline
			Boundary &  PDE &  SDE & Implementation 
			\\ 
			\hline
			\begin{center}$\Gamma_N$\end{center} & \begin{center}Neumann \end{center} & \begin{center}Reflecting\end{center} & \begin{center}
				\vskip0.2cm
				\begin{tikzpicture}[scale=0.75]
				\draw[thick] (1,0) -- (4,0) ;
				\fill (1.9,-0.6) circle (3pt);
				\draw (3.1,-0.6) circle (3pt);
				\draw[thick,-] (2.0,-0.5) -- (2.5,0);
				\draw[thick,->] (2.5,0) -- (3.05,-0.55)node [below,xshift=0.5cm] {{\tiny $p = 1$}};
				\end{tikzpicture}\end{center}
			\\
			\begin{center}$\begin{array}{c}\Gin\\ \text{(from outside)}\end{array}$ \end{center} & \begin{center}Robin\end{center} & \begin{center}Wait to get in\end{center} & 
			\begin{center}\begin{tikzpicture}[scale=0.75]
				\draw[thick] (1,-1.5) -- (1,0.25) ;
				\draw[dashed,->] (0.5,-0.5)  -- (2.0,-0.5)  node [above,yshift=-0.5cm] {{\tiny $p = P_{in}$}};
				\draw[dashed,->] (0.5,-0.5) .. controls +(down:10mm) and +(left:10mm) ..  (0.4,-0.450) node [below,yshift=-0.3cm,xshift=-0.2cm] {{\tiny $p = 1-P_{in}$}};
				\fill (0.5,-0.6) circle (3pt);
				\draw (2.1,-0.5) circle (3pt);
				\draw (0.5,-0.4) circle (3pt);
				\end{tikzpicture}\end{center}\\
			\begin{center}$\begin{array}{c}\Gin\\ \text{(from inside)}\end{array}$\end{center}& \begin{center}Robin \end{center}& \begin{center}Partially reflecting \end{center}& 
			\begin{center}\begin{tikzpicture}[scale=0.75]
				\draw[thick] (1,-0.75) -- (1,0.75) ;
				\draw[thick] (1.8,-0.65)  -- (1.0,0);
				\draw[dashed,->] (1.0,0) -- (1.70,0.60) node [above,xshift=0.2cm] {{\tiny $p = 1-P_{in}$}};
				\draw[dashed,->] (1.0,0) -- (0.7,0.0) node [above,xshift=-0.5cm] {{\tiny $p = P_{in}$}} node[below,xshift=-0.5cm]{{\tiny back to $\Gin$}};
				\fill (1.8,-0.65) circle (3pt);
				\draw (1.8,0.65) circle (3pt);
				\draw (0.65,0) circle (3pt);
				\end{tikzpicture}\end{center}
			\\
			\begin{center}$\Gout$\end{center} & \begin{center}Robin\end{center} & \begin{center}Partially reflecting\end{center} & \begin{center}\begin{tikzpicture}[scale=0.75]
				\draw[thick] (1,-0.75) -- (1,0.75) ;
				\draw[thick] (0.2,-0.65)  -- (1.0,0);
				\draw[dashed,->] (1.0,0) -- (0.2,0.60) node [above,xshift=-0.2cm] {{\tiny $p = 1-P_{out}$}};
				\draw[dashed,->] (1.0,0) -- (1.4,0.0) node [above,xshift=0.5cm] {{\tiny $p = P_{out}$}} node[below,xshift=0.6cm] {{\tiny leave domain}};
				\fill (0.2,-0.65) circle (3pt);
				\draw (0.2,0.65) circle (3pt);
				\draw (1.5,0) circle (3pt);
				\end{tikzpicture}\end{center}
			\\
			\hline
		\end{tabular}
	\end{center}
		\caption{Implementation of the boundary conditions at the SDE level. The filled dot represents the current state, while the empty circles represent the possible position in the following time step, which is realized with probability $p$. Here we use  the rescalings $P_{in} = \sqrt{\pi\Delta t /(2\sigma_1^2)}a(1-\rho(0,x_2))$ and $P_{out} = \sqrt{\pi\Delta t /\sigma_1^2}b\rho(L,x_2)$ according to \cite{Erban2007}.}\label{tab:bcs}
	\end{table}
	
	\subsection{Derivative-free optimization -- the Nelder--Mead algorithm}
	As detailed in Section~\ref{sec:estimation}, we will use derivative-free 
	methods for the minimization of \eqref{eq:J}, corresponding to
	finding the MAP estimator of the posterior probability distribution: the
	maximizer of \eqref{e:p}. 
	We choose the Nelder--Mead algorithm~\cite{NelderMead}, which is a widely used nonlinear unconstrained optimization algorithm. It is a direct search method which is based on the function evaluation of $n+1$ points in an $n-$dimensional space.
	A rough outline of the method (see~\cite{Conn2009} for more details) is given in Algorithm~\ref{alg:NM}. The number in boldface is the total number of function evaluations necessary if that particular step is performed. 
	\begin{algorithm}
		\caption{Nelder--Mead algorithm}
		\label{alg:NM}
		\begin{algorithmic}[1]
			\STATE{Choose an initial simplex $\{v^0,\dots,v^n\}$ and compute $\mathcal{J}^i = \mathcal{J}(v^i;X)$.}
			\WHILE{Diameter of simplex $> $ tolerance,}
			\STATE{{\bf Order} the $n+1$ vertices so that $\mathcal{J}^0\leq \mathcal{J}^1\leq\cdots\leq \mathcal{J}^n$.}
			\STATE{{\bf Reflect:} reflect the worst vertex $v^n$ over the centroid of the simplex, obtaining $v^r$. If $\mathcal{J}^0\leq \mathcal{J}^r<\mathcal{J}^{n-1}$, replace $v^n$ by $v^r$. {\bf [1]}}
			\STATE{If $\mathcal{J}^r<\mathcal{J}^0$, {\bf Expand} the reflected point, obtaining $v^e$.\\ If $\mathcal{J}^e\leq \mathcal{J}^r$,replace $v^n$ by $v^e$. Otherwise replace $v^n$ by $v^r$. {\bf [2]}}
			\STATE{If $\mathcal{J}^r\geq \mathcal{J}^{n-1}$, {\bf Contract:} the reflected point, obtaining $v^c$. \\ If $v^c$ improves on $\mathcal{J}^n$ then replace $v^n$ by $v^c$.{\bf [2]}}
			\STATE{Otherwise, perform a {\bf Shrink}. {\bf [n+1]}}
			\ENDWHILE
		\end{algorithmic}
	\end{algorithm}
	If the objective function is strictly convex then the algorithm only requires 1 or 2 function evaluations per iteration. Furthermore, it converges in 1D for strictly convex functions~\cite{Lagarias1998}. In 2D only limited convergence results are known. 
	We use \textsc{Matlab}'s implementation of the Nelder--Mead algorithm (via the function \emph{fminsearch}) with a modification of $\mathcal{J}$ that penalizes negative values of $v$. We observe a large decrease of the objective function in the first few iterations, as reported in the literature for many other
	applications.

	\subsection{MCMC method -- the pCN algorithm}
	The MAP estimator obtained with the Nelder--Mead algorithm does not provide us with any quantification of uncertainty on our estimates. In order
	to obtain such uncertainty estimates we sample from the posterior distribution
	given by \eqref{e:p}. 
	For this purpose we use the pCN (pre-conditioned Crank--Nicholson) method 
	described in~\cite{CRSW08}; it is a general purpose method for sampling
	from any distribution which is formed as the product of a (not
	necessarily smooth) function $\mathfrak{f}$ and a Gaussian density and
	has the advantage that derivatives of $\mathfrak{f}$ are not required. 
	In our problem this means that we avoid computing derivatives of $\Psi$.
	Looking forward, the method also has the potential to
	scaling up to cases where the objective is to estimate the shape of  
	$f(\rho)$ in \eqref{eq:F} nonparametrically.
	Furthermore, the algorithm has a single tuneable parameter $\beta$ which can be explored to maximize efficiency. Algorithm ~\ref{alg:pCN} gives the details. 
	
	\begin{algorithm}
		\caption{The pCN algorithm}
		\label{alg:pCN}
		\begin{algorithmic}[1]
			\STATE{Set $k=0$ and pick $v^{(0)}$.}
			\FOR{$k=1,...,N$, where $N$ is the number of iterations,}
			\STATE\label{line3}{Propose $y^{(k)}=m
				+\sqrt{(1-\beta^2)}(v^{(k)}-m)+\beta \xi^{(k)},
				\quad \xi^{(k)} \sim N(0,c).$}
			\STATE{ Set $v^{(k+1)}=y^{(k)}$ with probability $\alpha_k:=\alpha(v^{(k)},y^{(k)})$}
			\STATE{Set $v^{(k+1)}=v^{(k)}$ otherwise.}
			\STATE{$k \to k+1$.}
			\ENDFOR
		\end{algorithmic}
	\end{algorithm}
	
	The acceptance probability is given by
	\begin{equation}
		\label{eq:accp}
		\alpha(v,y)=\min\{1,\exp\bigl(\Psi(v;X)-\Psi(y;X)\bigr)\}\mathbb{1}(y>0).
	\end{equation}
	Notice, in particular, that if $\Psi(y^{(k)};X) \le \Psi(v^{(k)};X)$
	and $y^{(k)}>0$ then the proposed move is accepted with probability one.

	\section{Numerical results}
	\label{sec:numerical}

	In this section we present and discuss numerical results based on the solvers and methodologies presented in Section \ref{sec:comp}. In particular
	we want to estimate $v_{\max}$ using multiple trajectories of the coupled
	SDE-FP system. These trajectories are generated using both the time-dependent 
	and the steady state solution of the FPE, and for different inflow and outflow 
	conditions. The corresponding parameter estimation problem is then solved using
	the Nelder--Mead optimizer or the pCN Bayesian sampler. 
	Our numerical simulations lead to the following conclusions: 
	\begin{enumerate}
		\item the maximum speed $v_{\max}$ is learnable; { see Figure \ref{f:time-step}}
		\item results are more accurate and less uncertain if the influx and outflux rate $a$ and $b$ differ considerably; {see Figure \ref{f:traj_TD}}
		\item we do not obtain reliable estimates for all regimes using stationary density profiles; { see Figure \ref{f:PM_S}}
		\item estimates obtained from trajectories experiencing steady state densities are less uncertain than from time dependent ones (in the regimes where
		steady state estimation does give consistent results); { see Figures \ref{f:traj_TD} and \ref{f:traj_S}}.
	\end{enumerate} 
	
	In order to test the parameter estimation methodology, we generate a collection of trajectories for a range of
	parameters which represent the different density regimes. To create this data
	we choose the following
	parameters for all trajectories: corridor length $L = 3$, corridor width $2\ell = 0.5$, noise strength $\sigma_1 = \sigma_2 = \sigma_0 = 0.05$
	and a final time $T=2$. The time steps in the SDE solver are set to $\Delta t_{SDE} = 10^{-3}$.
	We assume that the true value for the maximum velocity is $v_{\max} = 1.5$ m/s, which is chosen close to values obtained from experiments. The final time is chosen sufficiently large to ensure that some 
	individuals have the time to leave the domain. The size of the time steps corresponds to the setting in a high frequency regime.
	We generate trajectories using a time dependent density $\rho = \rho(x,t)$ satisfying \eqref{e:FP-rescaled} as well as the steady state $\rho_s = \rho_s(x)$. Note that
	the latter choice is computationally less expensive and corresponds more closely to widely used experimental conditions, since data is typically collected 
	once the pedestrian flow has equilibrated.
	We consider five sets of parameters which span the three steady state density regimes:
	\begin{enumerate}[label=-,leftmargin=20pt,parsep=0pt]
		\item \textit{{ Outflux} limited:} $a = 0.4, \, b = 0.2$ and $a = 0.45, \, b = 0.4$,
		\item \textit{{ Influx}  limited:} $a = 0.2, \, b = 0.4$ and $a = 0.1, \, b = 0.15$,
		\item \textit{Maximal current:} $a = 0.9, \, b = 0.975$.
	\end{enumerate}
	
	Since the variance coefficients might not be known in practice, we will estimate the parameters using values for the diffusion coefficient which differ
	significantly from those present in the data; we take
	$\sigma_1 = \sigma_2 = \sigma = 1$ in our algorithm.
	This corresponds to a form of {\em model-mispecification}
	and avoids committing an {\em inverse crime} \cite{KS05}.
	
	\subsection{Benchmarking}
	In the following, we present a first set of numerical results which confirm that the optimization and MCMC methodologies perform as expected. In particular, we will see that the value of the estimator for $v_{\max}$ is consistent across both methodologies, and is also independent of the various parameters used, such as time steps or spatial mesh, and the parameter $\beta$ of the pCN algorithm.
	Throughout this subsection we use $J=20$ trajectories.
	
	\paragraph{Influence of the discretization} 
	This test case is of particular interest with respect to the computational efficiency, since each iteration of either 
	minimization algorithm requires one (or more) PDE solves. Therefore we wish to use as large time steps as possible as 
	well as a mesh which is as coarse as possible.  We first present numerical results related to the influence of the PDE time 
	step on the value of the estimator for $v_{\max}$.
	\begin{figure}[h!]
		\includegraphics[width=\linewidth]{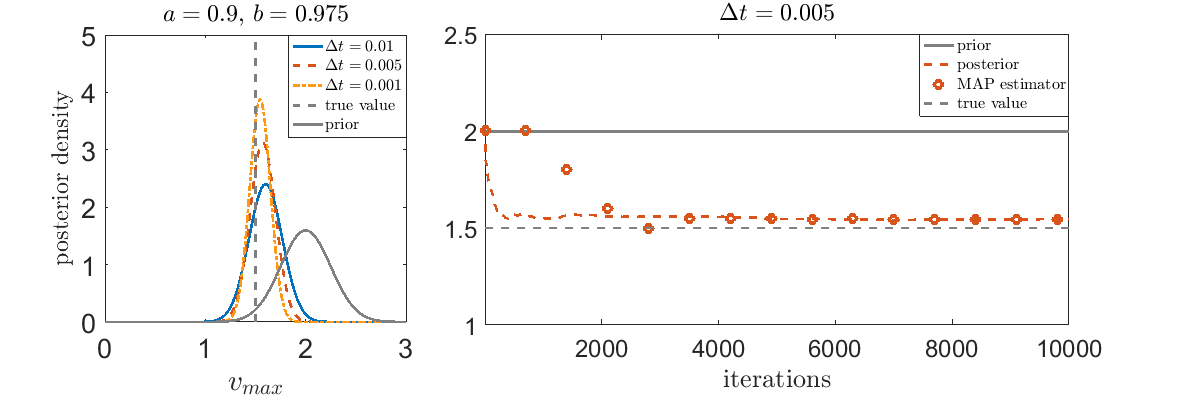}
		\caption{Influence of the PDE time step on the estimator for a maximal current regime with $a=0.9, b=0.975$. 
			Left: true value (gray dotted line), prior distribution (full gray line), and posterior distributions for different PDE time steps. 
			Right: prior mean (full gray line), true value (gray dotted line), and posterior mean (orange dotted line) and MAP estimator iterations (orange circles) 
			for $\Delta t = 0.005$.}\label{f:time-step}
	\end{figure}
	We run the pCN (and Nelder--Mead) algorithm using three different time steps for the PDE solve: $\Delta t = 0.001$, 
	$\Delta t = 0.005$ and $\Delta t = 0.01$ and interpolate the obtained values to fit the time step of the SDE trajectories. 
	We tested the influence of the time step in the maximal current regime with $a=0.9, \, b = 0.975$.
	The posterior distributions for $v_{\max}$ as well as the corresponding prior distribution and the true value are depicted 
	in Figure~\ref{f:time-step}. These results are consistent with those obtained using the Nelder--Mead algorithm to 
	compute the MAP estimator (successive iterations of the Nelder--Mead algorithm are depicted in the right panel with 
	orange circles, and we point out that the number of iterations for the MAP estimator was scaled by a factor of 
	approximately $10^{-3}$ for comparison).  We observe that all the test cases produce similar results, with a slightly 
	better agreement in the case of the finer time steps, where the results are also less uncertain, since the posterior 
	distribution has a smaller variance. For this reason, all the pCN and Nelder--Mead simulations presented below will 
	be performed with a PDE time step of $\Delta t = 0.005$.
	
	We observe a similar agreement between the mean of the posterior distribution and the MAP estimator for all the results 
	presented below, henceforth we present the posterior distribution only. We also point out that the MAP estimator (mode 
	of the posterior distribution) is approximately the same as the running average (mean of the posterior distribution). 
	This suggests that the posterior distribution is approximately
	Gaussian as is to be expected when the data is highly informative.
	Finally, we also tested the influence of the SDE time step (frequency of observations) and spatial mesh on the estimates, obtaining 
	similarly good results.
	
	\paragraph*{Influence of the parameter $\beta$} The value of the estimator obtained using the pCN algorithm should be independent of the parameter $\beta$ since
	$\beta$ is a tuneable parameter in the algorithm and not a parameter of the
	posterior distribution. In Figures~\ref{f:beta_TD} and~\ref{f:beta_S} we plot the prior and posterior distributions for $v_{\max}$ in two influx limited regimes, $a = 0.2, \, b = 0.4$ (left panels) and $a = 0.1, \, b = 0.15$ (right panels). Figure~\ref{f:beta_TD} was obtained in a time dependent regime, while Figure~\ref{f:beta_S} uses trajectories experiencing a steady state density.
	We observe in both figures that the posterior distribution is independent of the parameter $\beta$, as expected.

	\begin{figure}[h!]
		\includegraphics[width=\linewidth]{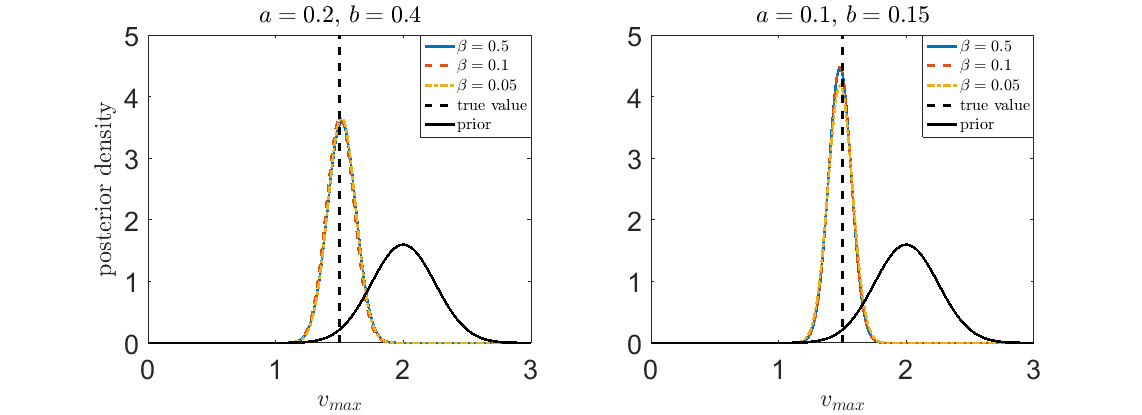}
		\caption{Influence of the parameter $\beta$ on the estimator for two influx limited regimes and using the solution of the time dependent Fokker--Planck equation. Left: $a=0.2, b=0.4$. Right: $a = 0.1, \, b = 0.15$.}\label{f:beta_TD}
	\end{figure}
	
	\begin{figure}[h!]
		\includegraphics[width=\linewidth]{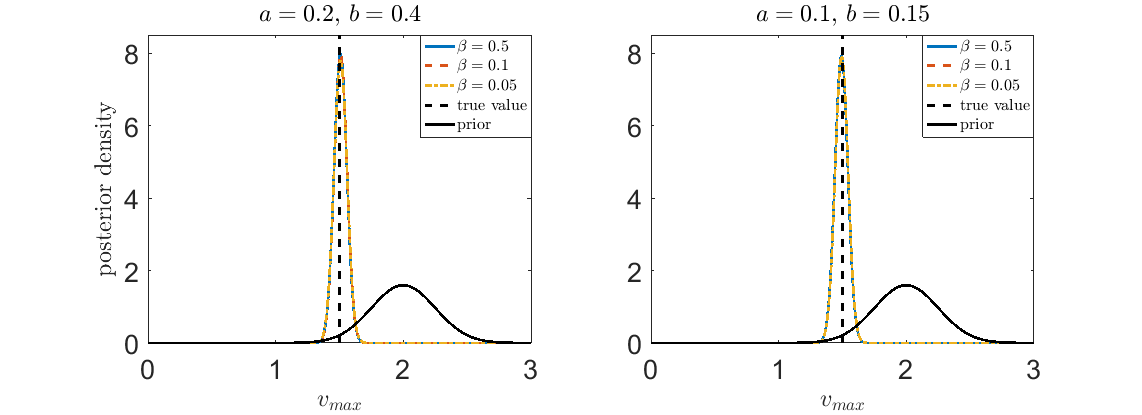}
		\caption{Influence of the parameter $\beta$ in the posterior distribution of $v_{\max}$ guesses for influx limited regimes and using the solution of the steady state Fokker--Planck equation. Left: $a=0.2, b=0.4$. Right: $a=0.1, b=0.15$.}\label{f:beta_S}
	\end{figure}

	\subsection{Influence of problem-specific parameters}
	In the following, we present our numerical results which confirm the conclusions presented at the beginning of this section.
	In particular, we will show that both approaches give consistent results with respect to the initial guess, prior mean and variance, and number of used trajectories. All the examples presented use $\beta = 0.1$ and $J=20$ trajectories, except the case in which we vary the number of trajectories.
	
	\paragraph*{Influence of the initial guess and prior mean} We assume that $v_{\max}$ has prior distributions $N(m,c)$ with $m\in\left\{1,2\right\}$ and $c = 0.25$ and test using 
	two initial guesses $v=1$ and $v=2$. Figure~\ref{f:PM_TD} depicts the prior distribution (gray full line), true value (gray dotted line) and the posterior distributions for each case tested. The MAP estimator agrees with the means of the posterior distributions for all cases. 
	\begin{figure}[h!]
		\includegraphics[width=\linewidth]{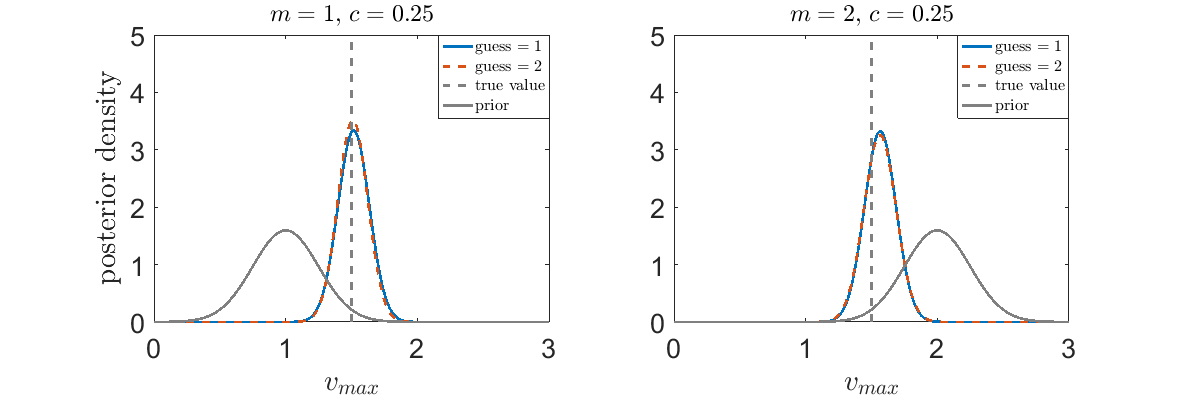}
		\caption{Influence of prior distribution and initial guess for an outflux limited regime with $a=0.45, b=0.4$  using solutions of the time dependent Fokker--Planck equation. 
			Left: Prior mean $m=1$. Right: Prior mean $m=2$. Both figures have $c=0.25$ and depict the prior distribution (full gray line), true value (dashed gray line) and posterior 
			distributions for initial guesses $v_{\max}=1, \, 2$. }\label{f:PM_TD}
	\end{figure}
	The left panel depicts the results for $m=1$ while the right panel has $m=2$. In each case we observe that the 
	estimates are independent of the initial guess, and both prior distributions produce similar results. Next we use trajectories generated with steady state density profiles. Figure~\ref{f:PM_S} shows the influence of 
	initial guess and prior mean for three different regimes: an outflux regime with $a = 0.4, \, b = 0.2$ (left panel), 
	an influx limited regime with $a=0.2, \, b = 0.4$ (middle panel) and a maximal current regime with $a = 0.9, \, 
	b = 0.975$ (right panel). We use the same prior distributions and initial guesses as in Figure~\ref{f:PM_TD}.
	\begin{figure}[h!]
		\includegraphics[width=\linewidth]{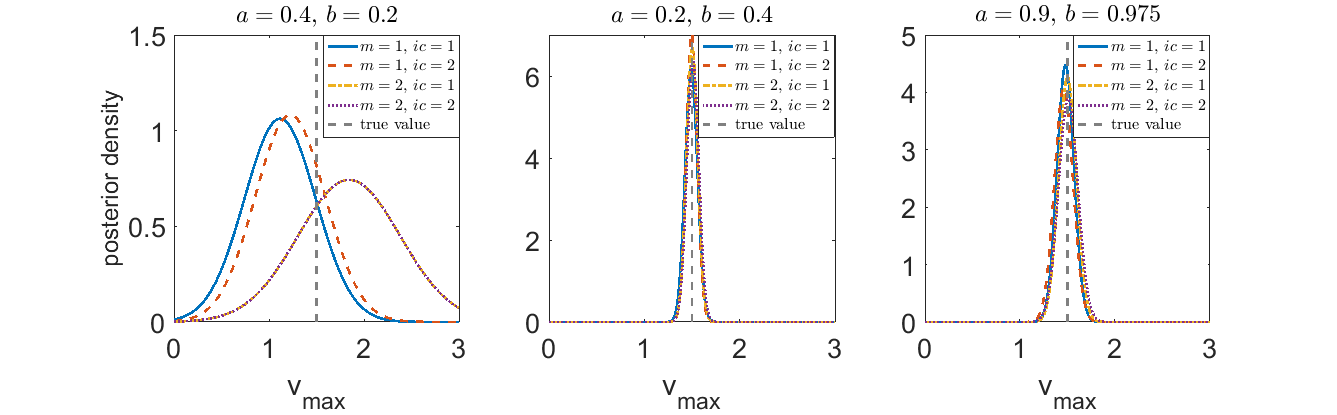}
		\caption{Influence of prior distribution and initial guess for three different regimes using solutions of the steady state equation. 
			Left: outflux limited regime with $a=0.4, b=0.2$. Middle: influx limited regime with $a=0.2, \, b = 0.4$. Right: maximal current regime with $a = 0.9, \, b = 0.975$.}\label{f:PM_S}
	\end{figure}
	We find perfect agreement between the Nelder--Mead and pCN estimators, and the Bayesian estimator has converged 
	to the posterior distribution. The maximal current (right panel) and influx limited (left panel) regimes produce similar 
	results as the corresponding time dependent case; however, in the outflux limited regime we can observe that the 
	posterior distribution stays close to its prior distribution.
	The reason for this may be found in the trajectories used -- in the outflux limited regimes, individuals experience very high densities
	and get `stuck'. Since they can not move, they do not experience the maximum velocity and carry little information about it. 
	In the influx limited regimes, the overall density is smaller and does not influence the velocity that much. This
	can be seen from the depicted trajectories in 
	Figure \ref{f:steady-trajectories}. Note that we do not observe similar problems using 
	time dependent density profiles, since most trajectories experience significant
	velocities before the flow has equilibrated.
	\begin{figure}[h!]
		\includegraphics[width=\linewidth]{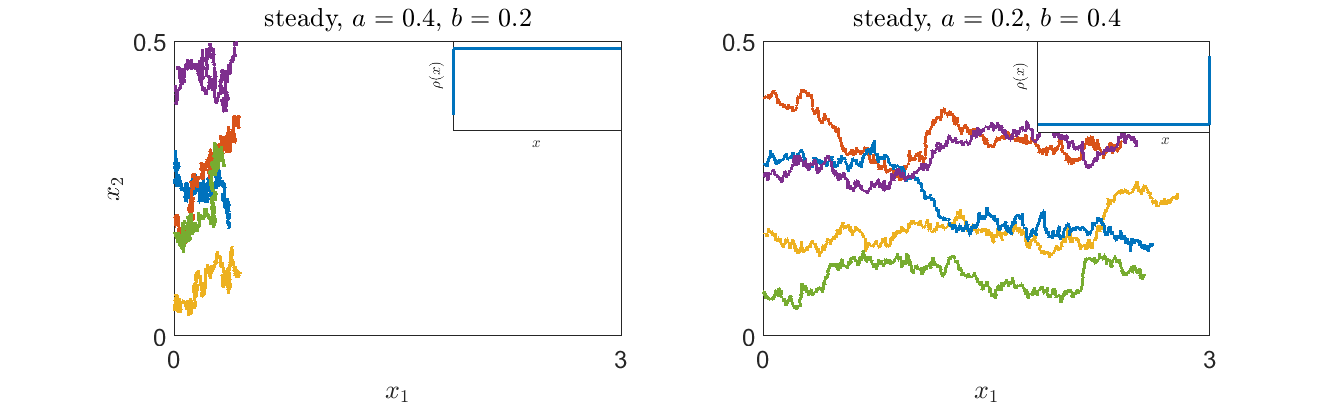}
		\caption{Examples of five trajectories for steady state densities. Left: outflux limited regime. Right: influx limited regime.}\label{f:steady-trajectories}
	\end{figure}

	\paragraph*{Influence of the prior variance} The influence of the prior variance is depicted in Figure~\ref{f:PV_TD} for the time dependent 
	case and in Figure~\ref{f:PV_S} for steady states.  Again, we test this for an outflux (left panels) and influx (right panels) limited regimes. As before, we observe that the estimate for $v_{\max}$ is accurate for both regimes in the time dependent case, except when the prior variance is too small, while for the steady state case we observe again that the outflux limited case produces a posterior distribution which mimics the prior. 
	\begin{figure}[h!]
		\includegraphics[width=\linewidth]{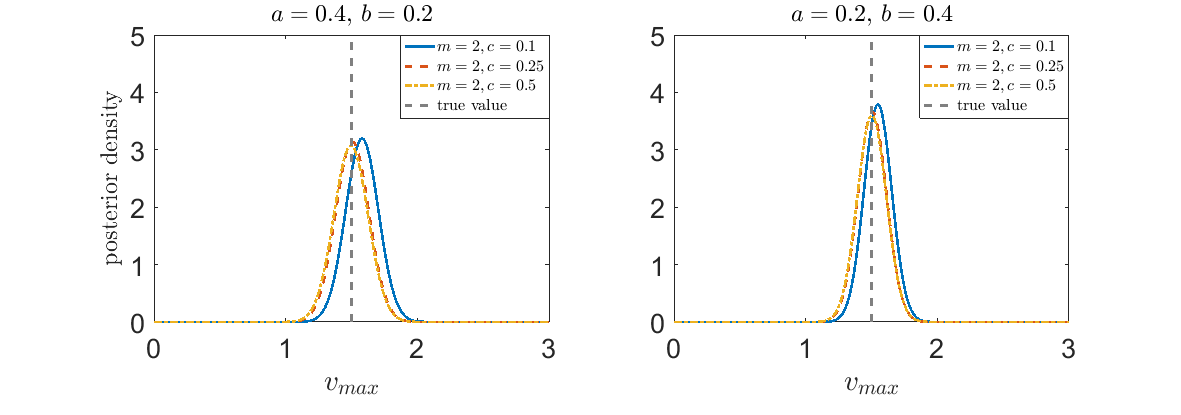}
		\caption{Influence of prior variance for two different regimes using solutions of the time dependent Fokker--Planck equation. Left: outflux limited regime with $a=0.4, b=0.2$. 
			Right: influx limited regime with $a=0.2, \, b = 0.4$.}\label{f:PV_TD}
	\end{figure}
	
	\begin{figure}[h!]
		\includegraphics[width=\linewidth]{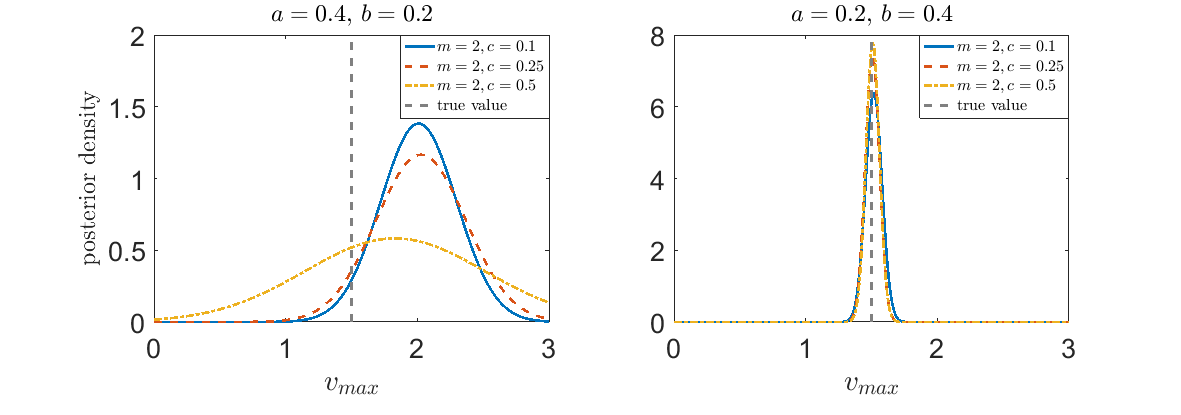}
		\caption{Influence of prior variance for two different regimes using solutions of the steady state equation. Left: outflux limited regime with $a=0.4, b=0.2$. 
			Right: influx limited regime with $a=0.2, \, b = 0.4$.}\label{f:PV_S}
	\end{figure}
	
	\paragraph*{Influence of information and the proximity of the parameters $a$ and $b$} 
	Next we discuss how the amount of information, that is the number of trajectories, and the ratio of the parameters 
	$a$ and $b$ influence the quality of the estimates. The ratio of $a$ and $b$ determines the location 
	of the boundary layer and the density range experienced by the trajectories. For example, the pair $a=0.2, \, b = 0.4$
	gives us a steady state profile with values for $\rho_s$ varying from $[0.2, 0.5]$, while $a = 0.1, \, b = 0.15$ gives 
	values in $[0.075,0.65]$.
	
	We will show that the number of trajectories $J$ plays 
	a large influence when $a\approx b$, while this is not the case when these parameters differ by a ``large" amount.
	In this set of experiments only we use the same value of $\Sigma$ to both
	generate the data and in the inference method employed: $\sigma_1 = \sigma_2 = \sigma_0 = 0.05$.
	Figures~\ref{f:traj_TD} and \ref{f:traj_S} illustrate the role of $J$
	and of the relative size of $a$ and $b$. In each of the panels, we vary the total number of trajectories, that is $J=5, \, 10, \, 15, \, 20$, and study 
	the effect in two influx limited and regimes. 
	\begin{figure}[h!]
		\includegraphics[width=\linewidth]{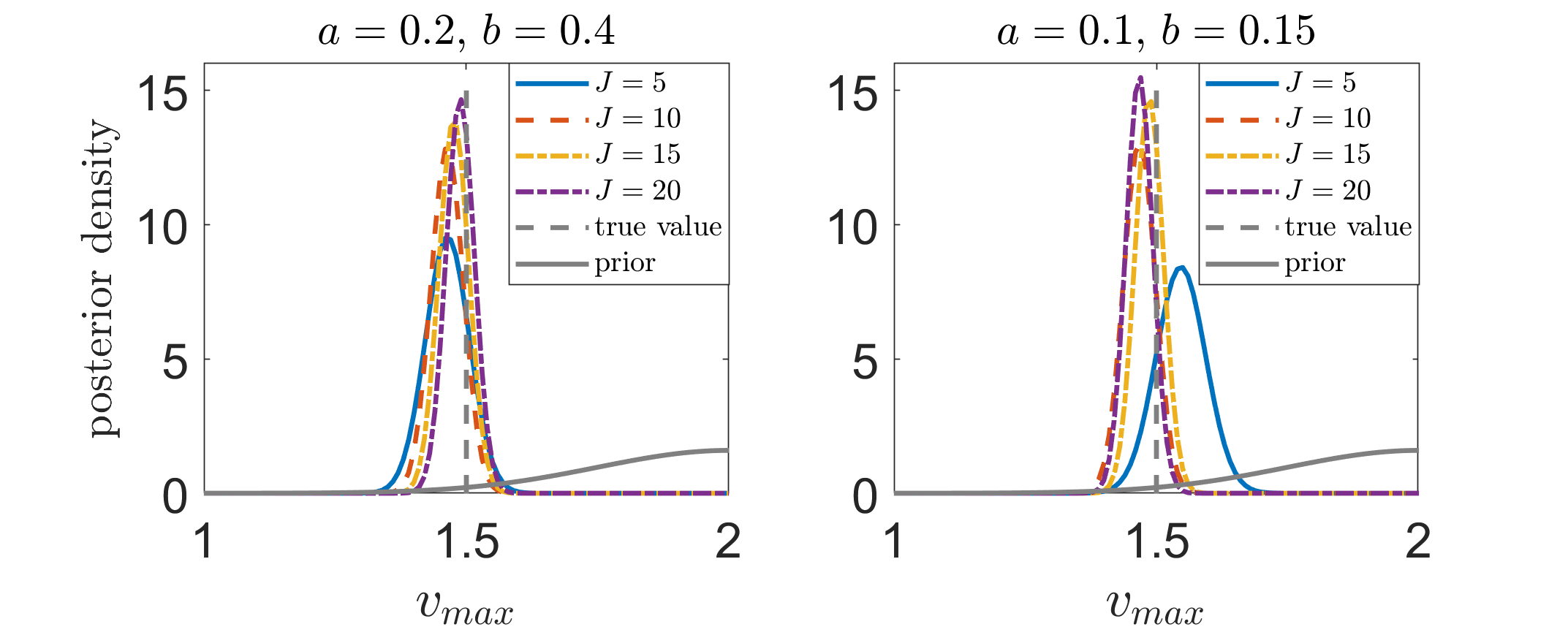}
		\caption{Influence of the number of trajectories on the estimate for $v_{\max}$ for influx limited regimes and in the time dependent case. 
			Left: $a=0.2, b=0.4$. Right: $a=0.1, b=0.15$.}\label{f:traj_TD}
	\end{figure}
	
	We observe that, especially in the regime when the ratio between $a$ and $b$ is 
	further from $1$, the estimates become better as the number of trajectories increases, both in the estimate itself (mean of the posterior distribution) and the variance of the posterior distribution. A similar behavior is observed for the steady state case, which is presented in Figure~\ref{f:traj_S} below.
	\begin{figure}[h!]
		\includegraphics[width=\linewidth]{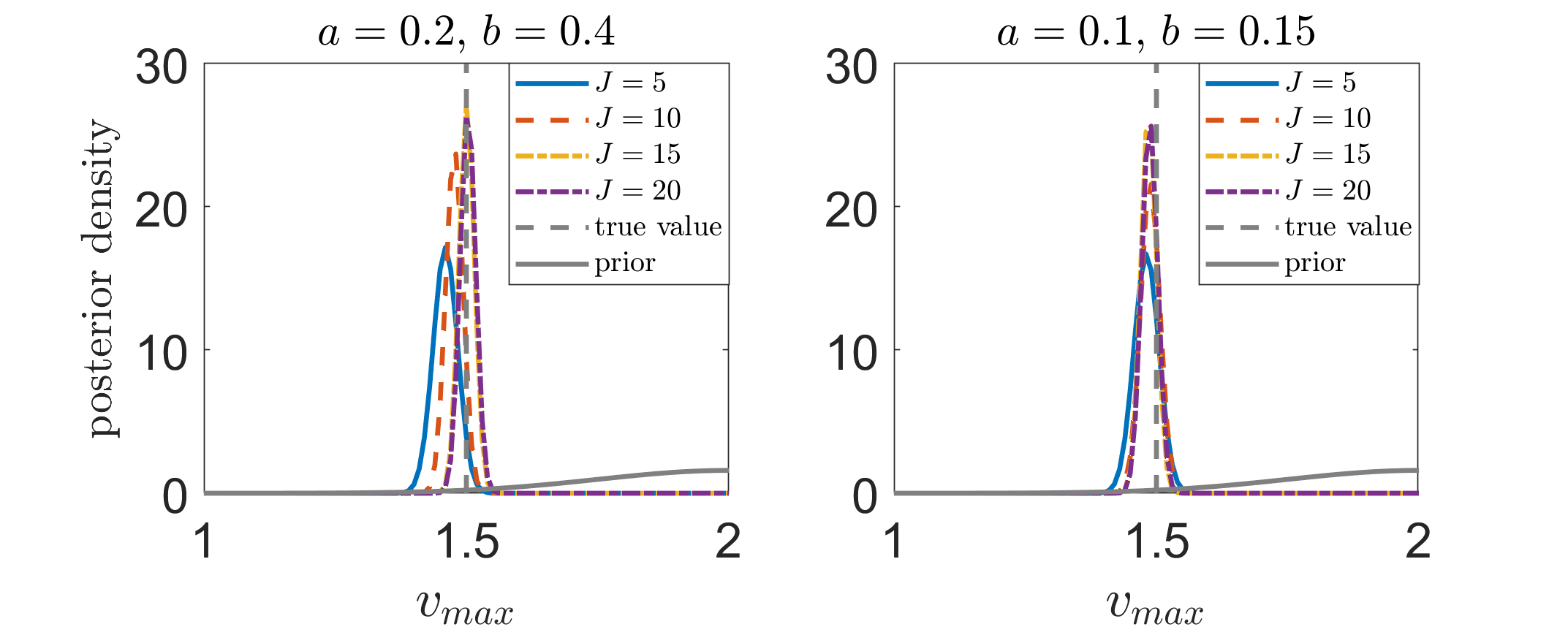}
		\caption{Influence of the number of trajectories on the estimate for $v_{\max}$ for influx limited regimes, in the steady state regime. 
			Left: $a=0.2, b=0.4$. Right: $a=0.1, b=0.15$. }\label{f:traj_S}
	\end{figure}
	
	Again, the estimate becomes closer to the true value, and with a smaller variance, as the number of trajectories increases. Furthermore, the regime where $a/b$ is 
	further from one has better estimates. This suggests that when planning experiments, these are preferable regimes to consider.
	We point out that Figures~\ref{f:traj_TD} and~\ref{f:traj_S} are zoomed in so that the effect of the number of trajectories is clearly seen and, in particular,
	the prior is supported on a much larger length scale than that displayed,
	demonstrating that it has been forgotten.

	\paragraph*{Time dependent vs steady state regimes}
	We have already pointed out that in a steady state situation, outflux limited regimes do not carry information about 
	$v_{\max}$ due to the trajectories getting stuck. However, an important thing to notice is that in the influx limited and 
	maximal current regimes the estimates for $v_{\max}$ are always accurate, and more importantly, less uncertain.  This can be observed in, e.g., Figures~\ref{f:PM_TD} and~\ref{f:PM_S}.

	\subsection{Corridor with a bottleneck}\label{s:bottleneck}
	We conclude with a more realistic example, in which  pedestrians move through a corridor with a bottleneck. The corridor has length $L = 3$ and a maximum width of $2\ell = 0.5$,
	which reduces to $2w = 0.1$ inside the bottleneck. The exit boundary is now split into a rigid wall 
	(Neumann/reflecting boundary conditions) and a door (Robin/partially reflecting boundary conditions) which has width $0.3$. We assume that
	diffusion in the vertical direction is smaller due to limited space inside the bottleneck. In particular we set $\sigma_1 = 0.05$ and $\sigma_2 = 0.03$. 
	In this example the vector $e_1$ in \eqref{eq:F} is replaced by the negative gradient of a potential $\phi$.
	This potential is computed from the eikonal equation with suitable boundary conditions, see \cite{QZZ2007}. The function $\phi$ corresponds to the shortest distance to the exit, and its negative gradient to the optimal trajectory to navigate towards a door and/or through a bottleneck, see Figure \ref{f:potential}. 
	
	\begin{figure}[h!]
		\begin{center}
			\includegraphics[width=0.9\linewidth]{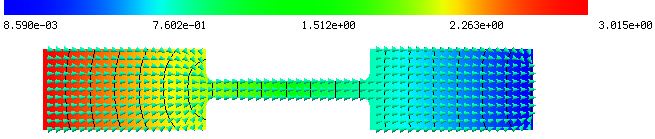}
			\caption{Solution of the eikonal equation in the corridor with a bottleneck described in this section. The arrows depict the direction of the negative gradient, which is the direction with which individuals move.}\label{f:potential}
		\end{center}
	\end{figure}

	We generate trajectories using a time step $\Delta t_{SDE} = 10^{-3}$ and set the final time $T=1$. In the PDE solver the time step was set to $\Delta t_{PDE} =  4\times 10^{-3}$.
	The true value of $v_{\max}$ was, as before $v_{\max} = 1.5$, and we used two of the previous regimes: 
	$a = 0.2, \, b = 0.4$ and $a = 0.4, \, b = 0.2$.
	\begin{figure}[h!]
		\includegraphics[width=\linewidth]{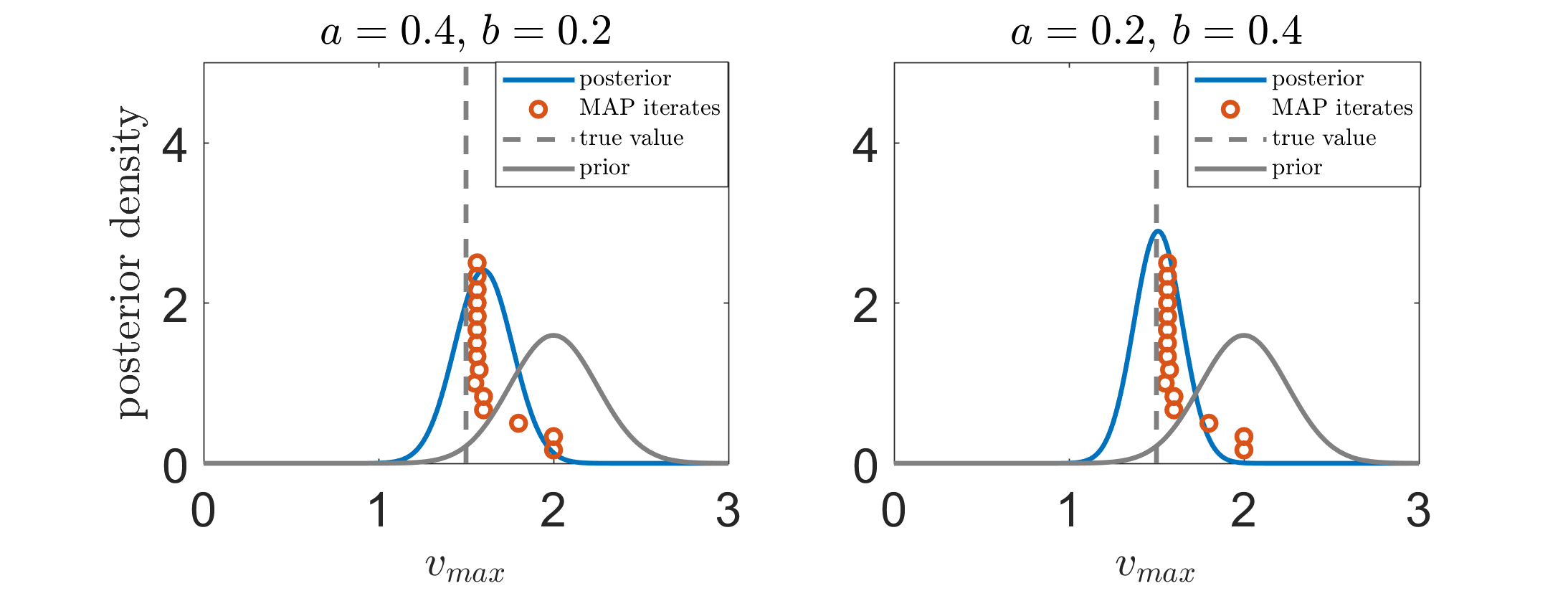}
		\caption{Results of the parameter estimation methodology in a corridor with a bottleneck. Left: outflux limited regime with $a = 0.4, \, b = 0.2$. Right: influx limited regime with $a = 0.2, \, b = 0.4$.}\label{f:bottleneck}
	\end{figure}
	
	In Figure~\ref{f:bottleneck} we present our numerical results. The figure depicts the true value of $v_{\max}$ (dashed 
	gray line), prior distribution (full gray line), posterior distribution (full blue line) and the Nelder--Mead algorithm iterates 
	(orange circles). We observe that both regimes produce very good estimates for $v_{\max}$. This demonstrates that the framework we have developed can be applied to more realistic setups; we intend to study the impact of parameters and geometry in more detail in future work.

	\section{Conclusions/Discussion}
	\label{sec:conclusion}
	We have studied a macroscopic model for a unidirectional flow of pedestrians in a corridor. The evolution 
	of the pedestrian density is given by a nonlinear Fokker--Planck equation, whose coefficients depend on the 
	so-called fundamental diagram. We formulated and analyzed the problem of estimating two parameters of interest in the 
	fundamental diagram using individual trajectories. We assume that these trajectories are realizations of a 
	generalized McKean--Vlasov equation. This identification problem was solved using derivative-free methodologies.
	We have shown that the first parameter -- the maximum pedestrian density -- cannot be estimated from the model considered. 
	The second characteristic quantity -- the maximum speed -- can be accurately learned from a variety of inflow and 
	outflow conditions, both in time dependent and steady state settings. 
	We have also seen that boundary conditions play an important role. We 
	believe that the proposed framework may help  
	to understand their impact in estimation, as well as experimental design.
	
	We also believe that the developed framework provides the basis for future developments in applied mathematics as well as transportation research. In particular, the next steps include the identification of parameters in different forms of the fundamental diagram, or the application of nonparametric estimation techniques to learn its functional form. 
	Furthermore, we want to use pedestrian trajectory data, which requires the framework to be generalized to noisy observations.
	
	These future developments will contribute to the validation of  the fundamental diagram adopted in many models in the transportation literature. It will also justify its use in certain parameter regimes for microscopic pedestrian models.

	\vspace{0.1in}
	
	\noindent{\bf Acknolwedgements} The work of S.G. and A.S. was supported  by the
	EPSRC Programme Grant EQUIP. The work of M.-T.W. and AS was supported
	by a Royal Society international collaboration grant. M.-T. W. acknowledges partial support from the Austrian Academy of Sciences via the New Frontier's grant NST-001. SG is grateful to Imperial College London for
	use of computer facilities.
	The authors are grateful to Grigorios Pavliotis for helpful discussions.
	
	\vspace{0.1in}

	\bibliographystyle{plain}
	\bibliography{ref}

	\section*{Appendix}
	
	\subsection{Proof of Theorem \ref{t:globalexistence} and Corollary \ref{c:regularity}}
	
	Equation \eqref{e:FP-rescaled} is a gradient flow with respect to the entropy functional
	\begin{align}\label{e:entropy}
		E(\rho) = \int_{\Omega} (\rho \log \rho + (1-\rho) \log (1-\rho) - \rho V) dx
	\end{align}
	with $V(x) = x_1$.  The corresponding entropy variable $u = \frac{\delta E}{\delta \rho} = \log \rho - \log (1-\rho) - V$ allows us to write \eqref{e:FP-rescaled} as
	\begin{align*}
		\partial_t \rho = \Div(m(\rho) \nabla u)
	\end{align*}
	with a nonlinear mobility $m(\rho) = \rho (1-\rho)$. This gradient flow structure provides the necessary a-priori estimates to prove global in time existence of solutions.
	
	\begin{proof}[Proof of Theorem \ref{t:globalexistence}]
		\noindent Let $N \in \mathbb{N}$ and consider a discretization of $(0,T]$ into subintervals $(0,T] = \cup_{k=1}^N ((k-1) \tau, k\tau]$ with time steps
		$\tau = \frac{T}{N}$.  We look for a weak solution $\rho: \Omega \times [0,T] \rightarrow \mathcal{S}$ of \eqref{e:FP-rescaled} in the sense of \eqref{eq:weakFP}.
		\noindent The proof is based on the implicit Euler discretization, which gives us a recursive sequence of elliptic problems. We consider its regularized version
		\begin{align}\label{e:timediscreteFPE}
			\frac{\rho_k - \rho_{k-1}}{\tau} = \Div(m(\rho_k) \nabla u_k) + \tau \Delta u_k.
		\end{align}
		Existence of at least one weak solution $\rho \in H^1(\Omega) \cap L^{\infty}(\Omega)$ with $0 \leq \rho \leq 1$ to \eqref{e:timediscreteFPE} follows from Theorem 3.5 in \cite{Burger2016}, if assumptions \ref{a:domain}-\ref{a:potential} are satisfied.
		Note that the transformation from $\rho$ to the entropy variables $u$ is one to one and given by
		\begin{align}
			\rho = \frac{e^{u+V}}{1+e^{u+V}}.
		\end{align}
		Therefore solutions $\rho$ lie automatically in the set $\mathcal{S}$. Furthermore the entropy density
		\begin{align*}
			h(\rho) = \rho \log \rho - (1-\rho) \log(1-\rho) - \rho V
		\end{align*}
		is strictly convex for $\rho \in \mathcal{S}^0$ since $h'(\rho) = \log\frac{\rho}{1-\rho} - V$  and $h''(\rho) = \frac{1}{\rho} + \frac{1}{1-\rho}.$
		Since $h$ is convex  we have that $h(q_1) - h(q_2) \leq h'(q_1) (q_1 - q_2)$ and therefore for $q_1 = \rho_k$ and $q_2 = \rho_{k-1}$ that
		\begin{align}\label{e:convex}
			h(\rho_k) - h(\rho_{k-1}) \leq h'(\rho_k) (\rho_k - \rho_{k-1}).
		\end{align}
		
		\noindent \textit{Entropy dissipation} We consider the weak formulation of \eqref{e:timediscreteFPE} to obtain
		\begin{align*}
			\frac{1}{\tau} \int_{\Omega} (\rho - \rho_{k-1}) \varphi~ dx + \int_{\Omega} \nabla \varphi^ T m(\rho_k) \nabla u_k~ dx -& a \int_{\Gamma_{in}} (1-\rho_k) \varphi ~ ds + \\
			& b \int_{\Gamma_{out}} \rho_k \varphi~ ds + \tau\int_{\Omega} \nabla \varphi \nabla u_k = 0, 
		\end{align*}
		for $\varphi \in H^1(\Omega)$ and use that $\rho_k = h'^{-1}(u_k)$. 
		Since $u_k = h'(\rho_k)$ we can rewrite \eqref{e:convex} as
		\begin{align}\label{e:convexity}
			\frac{1}{\tau} \int_{\Omega} (\rho_k - \rho_{k-1}) u_k dx \geq \frac{1}{\tau} \int_{\Omega} [h(\rho_k) - h(\rho_{k-1})] dx.
		\end{align}
		Using \eqref{e:convexity} and choosing test functions $\varphi = u_k$ gives
		\begin{align*}
			\int_{\Omega} h(\rho_k) dx + \tau\int_{\Omega} &\nabla u_k^T m(\rho_k) \nabla u_k~ dx - a \tau \int_{\Gamma_{in}} (1-\rho_k) u_k ~ ds +\\
			+ &b \tau \int_{\Gamma_{out}} \rho_k u_k~ ds + \tau^2\int_{\Omega} \nabla u_k \nabla u_k \leq \int_{\Omega} h(\rho_{k-1}) dx . 
		\end{align*}
		
		\noindent Next we prove the following a-priori estimate:
		\begin{lemma}
			Let $\rho \in L^2(\Omega)$ and $\rho \in \mathcal{S}^0$ a.e. be such that $u = h'(\rho) \in H^1(\Omega)$. Then there exist constants such that
			\begin{align*}
				\int_{\Omega} \nabla u^T m(\rho) \nabla u - a \int_{\Gamma_{in}} (1-\rho) u ~ds + b \int_{\Gamma_{out}} \rho u~ ds \geq \int_{\Omega} \lvert \nabla \rho \rvert^2 dx  - C
			\end{align*}
		\end{lemma}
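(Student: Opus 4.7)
The plan is to bound the integrand pointwise, exploiting the identity $h''(\rho) = 1/m(\rho)$ for the bulk term and the entropic structure of $u = h'(\rho) = \log\rho - \log(1-\rho) - V$ for the boundary contributions. From $\nabla u = \nabla\rho/m(\rho) - \nabla V$ I would expand
\begin{align*}
m(\rho)|\nabla u|^2 = \frac{|\nabla\rho|^2}{m(\rho)} - 2\nabla\rho \cdot \nabla V + m(\rho)|\nabla V|^2.
\end{align*}
Using $m(\rho) = \rho(1-\rho) \leq 1/4$, and hence $1/m(\rho) \geq 4$, I would absorb the cross term via Young's inequality with weight three, leaving the residual $|\nabla V|^2$ contributions controlled by $\|V\|_{W^{1,\infty}}$ thanks to assumption \ref{a:potential}. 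Integrating over $\Omega$ then yields
\begin{align*}
\int_\Omega m(\rho)|\nabla u|^2\, dx \geq \int_\Omega |\nabla\rho|^2\, dx - C_1,
\end{align*}
with $C_1$ depending only on $\|V\|_{W^{1,\infty}}$ and $|\Omega|$.

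Next I would establish pointwise lower bounds on the two boundary integrands. Expanding,
\begin{align*}
-a(1-\rho)u = -a(1-\rho)\log\rho + a(1-\rho)\log(1-\rho) + aV(1-\rho).
\end{align*}
The first summand is nonnegative because $(1-\rho)\log\rho \leq 0$ on $\mathcal{S}^0$; the second is bounded below by $-a/e$, since $s\mapsto s\log s$ attains its minimum $-1/e$ on $[0,1]$; and the third is bounded below by $-a\|V\|_{L^\infty}$. A symmetric computation, with the roles of $\rho$ and $1-\rho$ exchanged, gives $b\rho u \geq -C_2$ pointwise on $\Gout$. Integrating these bounds over $\Gin$ and $\Gout$ respectively, and combining with the bulk estimate, produces the desired inequality after absorbing all constants into a single $C$ that additionally depends on $a$, $b$, and $|\partial\Omega|$.

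The subtle point, and the one I expect to require the most care, is that $u$ itself blows up as $\rho \to 0$ or $\rho \to 1$, so neither boundary integrand is automatically finite when taken in isolation. The argument goes through only because the linear prefactors $(1-\rho)$ on $\Gin$ and $\rho$ on $\Gout$ precisely kill the logarithmic singularities of $u$ at the endpoints of $\mathcal{S}$; this is the cancellation that makes the Robin boundary terms compatible with the entropy method. Beyond careful sign bookkeeping and the invocation of $V \in W^{1,\infty}$, no further machinery is needed, but this is the step where the specific forms of the mobility, the entropy density, and the boundary conditions conspire.
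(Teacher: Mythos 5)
Your proof is correct and, for the bulk term, is essentially the paper's: both expand $m(\rho)\lvert\nabla u\rvert^2=\lvert\nabla\rho\rvert^2/m(\rho)-2\,\nabla\rho\cdot\nabla V+m(\rho)\lvert\nabla V\rvert^2$, absorb the cross term by Young's inequality using $m(\rho)\le \tfrac14$, and control the remainder via $\lVert V\rVert_{W^{1,\infty}}$ (the paper's choice of weights retains the prefactor $2$ in front of $\int_\Omega\lvert\nabla\rho\rvert^2\,dx$, yours retains $1$; either suffices for the stated inequality). The boundary terms are handled by a slightly different, and arguably more transparent, decomposition: you bound the three summands of $-a(1-\rho)u$ pointwise, using the sign of $(1-\rho)\log\rho$, the elementary bound $s\log s\ge -1/e$ on $[0,1]$, and $V\in L^\infty$ from assumption \ref{a:potential}; the paper instead groups $(1-\rho)\log\frac{1-\rho}{\rho}+2\rho-1$ into a Kullback--Leibler-type expression, which is nonnegative, and bounds the leftover term using only boundedness of $\rho$ and the $L^2$ trace of $V$. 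The two routes are equivalent in content, with yours requiring the (available) $L^\infty$ bound on $V$ where the paper's needs only an integrable trace. One small caveat on your closing remark: the prefactor $(1-\rho)$ on $\Gin$ does not cancel the $\log\rho$ singularity as $\rho\to 0$ --- that summand can still be $+\infty$ --- but it has the favourable sign, so the lower bound is unaffected; your estimates handle this correctly even though the prose slightly overstates the cancellation.
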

		\begin{proof}
			For the first term we obtain
			\begin{align*}
				\int_{\Omega} \nabla u^T m(\rho) \nabla u ~ dx  = \int_{\Omega} \rho (1-\rho) \lvert \nabla u \rvert^2 dx.
			\end{align*}
			Since $\nabla u = \frac{\nabla \rho}{\rho(1-\rho)} - \nabla V$ we have
			\begin{align*}
				\int_{\Omega} \nabla u^T m(\rho) \nabla u dx &=  \int_{\Omega} \frac{\lvert \nabla \rho \rvert^2}{\rho(1-\rho)} dx - 2 \int_{\Omega} \nabla V \cdot \nabla \rho + \int_{\Omega} \rho(1-\rho) \lvert \nabla V\rvert^2 dx = \\
				&\geq  \int_{\Omega} \frac{\lvert \nabla \rho \rvert^2}{2\rho (1-\rho)} - \rho(1-\rho) \lvert \nabla V\rvert^2 dx\\
				& \geq 2 \int_{\Omega} \lvert \nabla \rho \rvert^2 dx - \frac{1}{4} \int_{\Omega} \lvert \nabla V \rvert^2 dx.
			\end{align*}
			The estimates follow from Cauchy's inequality and the fact that $\rho(1-\rho) \leq \frac{1}{4}$. The inflow boundary term can be rewritten as
			\begin{align*}
				- \int_{\Gamma_{in}} (1-\rho) u ds &= -\int_{\Gamma_{in}} (1-\rho) (\log \rho - \log(1-\rho) - V) ds ¸\\
				& = \int_{\Gamma_{in}} \left[(1-\rho) \log \frac{1-\rho}{\rho} + 2 \rho - 1 \right]ds + \int_{\Gamma_{in}} \left[(1-\rho) V - 2 \rho + 1 \right] ds.
			\end{align*}
			The first term on the right hand side is a Kullback--Leibler distance and therefore non-negative. Since $V \in H^1(\Omega)$ we know that its trace satisfies 
			$V \mid_{\partial \Omega} \in L^2(\partial \Omega)$
			and since $\rho \in \mathcal{S}$ the second term is bounded.
			We can use a similar argument for the outflow term, which can be written as
			\begin{align*}
				- \int_{\Gamma_{out}} \rho u ~ds &= -\int_{\Gamma_{out}} \rho (\log \rho - \log (1-\rho) - V)~ ds = \\
				& = \int_{\Gamma_{out}} \left[\rho \log \frac{1-\rho}{\rho} + 2 \rho -1 \right] ~ds + \int_{\Gamma_{out}} \left[\rho V -2 \rho + 1\right] ds.
			\end{align*}
			Again we have a non-negative and a bounded term on the right hand side. 
		\end{proof}
		The dissipation inequality and the recursion yields
		\begin{align}
			\int_{\Omega} h(\rho_k) dx  + \tau \sum_{j=1}^k \int_{\Omega} \lvert \nabla \rho_k \rvert^2 dx  + \tau^2 \sum_{k=1}^n R(u_k, u_k) \leq \int_{\Omega} h(\rho_0) dx + T C.
		\end{align}
		where $R(u,\varphi) = \int_{\Omega} \nabla u \nabla \varphi dx$.
		This discrete entropy dissipation relation allows us to pass to the limit $\tau \rightarrow 0$.\\
		
		\noindent \textit{The limit $\tau \rightarrow 0$:} Let $\rho_k$ denote  a sequence of solutions to \eqref{e:timediscreteFPE}. We define $\rho_{\tau}(x,t) = \rho_k(x)$ for $x \in \Omega$
		and $t \in ((k-1)\tau, k\tau]$
		Then $\rho_{\tau}$ solves the following problem where $\sigma_{\tau}$ denotes the shift operator, that 
		is $(\sigma_{\tau} \rho_{\tau})(x,t) = \rho_{\tau}(x,t-\tau)$ for $\tau \leq t \leq T$,
		and 
		\begin{align}\label{e:weaktimeshift}
			\begin{split}
				\int_0^T \int_{\Omega}& \left(\frac{1}{\tau} (\rho_{\tau} - \sigma_{\tau} \rho_{\tau}) \varphi + \nabla \rho_{\tau} \nabla \Phi - \rho_{\tau}(1-\rho_{\tau}) \nabla \varphi \right)dx \\ 
				&\qquad \qquad - a \int_0^T \int_{\Gamma_{in}} (1-\rho_{\tau}) \varphi ds + b \int_{\Gamma_{out}} \rho_{\tau} \varphi ds= 0.
			\end{split}
		\end{align}
		for test functions $\varphi \in L^2(0,T; H^1(\Omega))$. Then the entropy dissipation inequality becomes
		\begin{align}
			\int_{\Omega} h(\rho_{\tau}(T)) dx  + \int_0^T \int_{\Omega} \lvert \nabla \rho_\tau \rvert^2 dx  + \tau \int_0^T  R(u_k, u_k) \leq \int_{\Omega} h(\rho_0) dx + T C.
		\end{align}
		which gives the a-priori estimate $\lVert \rho_{\tau} \rVert_{L^2(0,T; H^1(\Omega))} \leq K$. Using the a-priori estimates we use \eqref{e:weaktimeshift} to obtain
		\begin{align*}
			\int_0^T \int_{\Omega} \langle \rho_{\tau} - \sigma_{\tau}\rho, \varphi \rangle dt \leq K \lVert \varphi \rVert_{L^2(0,T; H^1(\Omega))}.
		\end{align*}
		We know that $\rho_{\tau}$ is in $L^2(0,T; H^1(\Omega))$ and $\frac{1}{\tau}\langle \rho_{\tau} - \sigma_{\tau} \rho_{\tau}\rangle$  in $L^2(0,T; H^1(\Omega)^*)$. Therefore we can use 
		Aubin's lemma to conclude the existence of a subsequence, also denoted by $\rho_{\tau}$ such that for $\tau \rightarrow 0$
		\begin{align}\label{e:strongconvrho}
			\rho_{\tau} \rightarrow \rho \quad \text{ strongly in } L^2(0,T;L^2(\Omega)).
		\end{align}
		Finally we check that all terms in \eqref{e:weaktimeshift} converge to the right limit as $\tau \rightarrow 0$. Because of \eqref{e:strongconvrho} we know that
		\begin{align*}
			1-\rho_{\tau} \rightarrow 1-\rho \quad \text{ strongly in } L^2(0,T, L^2(\Omega)).
		\end{align*}
		Since $V \in H^1(\Omega)$ we can pass to the limit in the boundary terms as well. This concludes the existence proof.
	\end{proof}

	\begin{corollary}\label{c:regularity}
		Let all assumptions of Theorem \ref{t:globalexistence} be satisfied. Then every weak solution $\rho \in L^2(0,T; H^1(\Omega))$ is also in $C(\bar{\Omega} \times (0,T))$.
	\end{corollary}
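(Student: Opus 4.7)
The plan is to recognize that, once we know $\rho \in L^2(0,T;H^1(\Omega))$ with $0 \le \rho \le 1$, the nonlinear Fokker--Planck equation \eqref{e:FP-rescaled} may be reinterpreted as a \emph{linear} parabolic equation with bounded measurable coefficients, to which classical De Giorgi--Nash--Moser regularity applies. First I would expand the divergence using assumption \ref{a:potential} (so that $\Delta V = 0$) to rewrite \eqref{e:FP-rescaled} in the non-divergence form
\begin{equation*}
\partial_t \rho = \Delta \rho - (1-2\rho)\,\nabla V \cdot \nabla \rho.
\end{equation*}
Because $\rho \in L^\infty(\Omega \times (0,T))$ by Theorem \ref{t:globalexistence} and $\nabla V \in L^\infty(\Omega)$ by \ref{a:potential}, the drift coefficient $b := -(1-2\rho)\nabla V$ belongs to $L^\infty(\Omega \times (0,T); \mathbb{R}^n)$, and the diffusion is the identity.

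Next I would apply the interior parabolic Hölder estimate of Nash--Moser (see e.g.\ Ladyzhenskaya--Solonnikov--Uraltseva) to conclude that $\rho \in C^{\alpha,\alpha/2}_{\mathrm{loc}}(\Omega \times (0,T))$ for some $\alpha \in (0,1)$. This handles continuity on the interior of the space-time domain. For the boundary regularity up to $\partial \Omega$, I would view the problem as a linear parabolic equation with Robin-type boundary data that is linear (hence Lipschitz) in $\rho$: on $\Gamma_{in}$ the boundary condition is $\partial_n \rho + (\mathrm{lower order in } \rho) = -a(1-\rho)$, on $\Gamma_{out}$ it is $\partial_n \rho + (\mathrm{l.o.t.}) = b\rho$, and on $\Gamma_N$ it is a homogeneous Robin/Neumann condition. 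Since $\partial \Omega \in C^2$ by \ref{a:domain} and the boundary coefficients are smooth and bounded, boundary De Giorgi--Nash--Moser estimates for oblique/Robin problems (or equivalently a local reflection argument for Neumann-type data followed by a Duhamel perturbation absorbing the lower-order terms) deliver Hölder continuity up to each smooth portion of $\partial \Omega$.

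The one delicate point, which I expect to be the main obstacle, is continuity across the interfaces $\overline{\Gamma_{in}} \cap \overline{\Gamma_N}$ and $\overline{\Gamma_{out}} \cap \overline{\Gamma_N}$, where the type of the boundary condition changes. Near such mixed-boundary edges, standard boundary Hölder theory does not directly apply, and one typically either smooths the boundary data continuously across the transition, straightens the boundary locally by a $C^2$ diffeomorphism and performs a reflection, or invokes results specifically for mixed boundary value problems for parabolic equations with bounded coefficients. Since the values of the boundary fluxes $-a(1-\rho)$ and $0$, and separately $b\rho$ and $0$, are both bounded and do not jump in a degenerate manner (they remain continuous in $\rho$), I expect these corner effects to yield only a mild loss of Hölder exponent and still preserve plain continuity. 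Combining the interior estimate and the boundary estimates via a standard covering argument of $\bar \Omega \times [t_0, T-t_0]$ for arbitrary $t_0 > 0$ then yields $\rho \in C(\bar\Omega \times (0,T))$, completing the proof.
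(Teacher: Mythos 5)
Your proposal is correct in spirit but takes a genuinely different route from the paper. The paper's proof is a bootstrap through maximal $L^q$ parabolic regularity: it first treats \eqref{e:FP-rescaled} as a linear heat equation with a divergence-form source $-\Div(h)$, $h=\rho F(\rho)\in L^2(0,T;L^2(\Omega))$, to upgrade to $L^\infty(0,T;L^2(\Omega))$; then, using $\Delta V=0$ exactly as you do, it rewrites the equation as a linear advection--diffusion problem with Robin data $g\in L^2(0,T;L^q(\Gamma))$, invokes $L^q$ maximal regularity to obtain $\rho\in L^q(0,T;W^{2,q}(\Omega))\cap W^{1,q}(0,T;L^q(\Omega))$, and concludes by Sobolev embedding. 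You instead exploit the same non-divergence rewriting $\partial_t\rho=\Delta\rho-(1-2\rho)\nabla V\cdot\nabla\rho$ (which is algebraically correct, since $\Div(\rho(1-\rho)\nabla V)=(1-2\rho)\nabla V\cdot\nabla\rho$ when $\Delta V=0$) to view the equation as linear parabolic with bounded measurable drift and apply De Giorgi--Nash--Moser. Your route requires less machinery, is stable under rougher coefficients, and delivers H\"older rather than mere continuity; the paper's route yields the stronger Sobolev regularity $W^{2,q}$ as a byproduct. One remark on your ``delicate point'': there is in fact no change of \emph{type} of boundary condition at $\overline{\Gamma_{in}}\cap\overline{\Gamma_N}$ and $\overline{\Gamma_{out}}\cap\overline{\Gamma_N}$ --- all three conditions are conormal (Robin/Neumann) flux conditions, and the only irregularity is that the boundary datum jumps as a function of $x$ across the interface. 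Boundary H\"older estimates for conormal problems need only $L^p$ integrability of the datum on $\partial\Omega$, not its continuity, so this is handled without reflection tricks or mixed Dirichlet--Neumann theory; your concern is thus more conservative than necessary, though the remedies you list would also work. Note also that assumption \ref{a:domain} already imposes $\partial\Omega\in C^2$ precisely to sidestep genuine corner singularities.
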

	\begin{proof}[Proof of Corollary \eqref{c:regularity}]
		The assertion follows from a bootstrap argument. Since $\rho \in L^2(0,T; H^1(\Omega))$ we can rewrite equation \eqref{e:FP-rescaled} as
		\begin{align*}
			\begin{aligned}
				\partial_t \rho(x,t) - \Div(\Sigma \nabla \rho) &= -\Div(h) & \text{ in } \Omega \times (0,T)\\
				\rho(x,t) &= g(x) & \text{ in } \Gamma \times (0,T)
			\end{aligned}
		\end{align*}
		where $h = \rho F(\rho)  \in L^2(0,T; L^2(\Omega))$ and $g \in L^2(0,T; L^2(\Gamma))$ then 
		\begin{align*}
			\lVert \rho \rVert_{L^{\infty}(0,T; L^2(\Omega))} \leq \lVert h \rVert_{L^2(0,T; L^2(\Omega))} + \lVert g \rVert_{L^2(0,T;L^2(\Gamma))}.
		\end{align*}
		Since $\Delta V = 0$ we can rewrite \eqref{e:FP-rescaled} as
		\begin{align*}
			\begin{aligned}
				\partial_t \rho - \Div(\Sigma \nabla \rho) + v \cdot \nabla \rho &= 0 & \text{ in } \Omega \times (0,T)\\
				(\nabla \rho + v \rho) \cdot n  &= g & \text{ in } \Gamma \times (0,T)\\
				\rho(x,0) &= 0 & \text{ in } \Omega,
			\end{aligned}
		\end{align*}
		with $g \in L^2(0,T; L^q(\Gamma))$. This equation has a unique solution $\rho \in L^q(0,T; W^{2,q}(\Omega)) \cap W^{1,q}(0,T; L^q(\Omega))$. From Sobolev embeddings we deduce that
		$\rho \in C^0(\bar{\Omega} \times (0,T))$.
	\end{proof}
	
\end{document}